\newtheorem{theorem}{Theorem}[section]
\newtheorem*{theorem*}{Theorem}
\newtheorem{proposition}[theorem]{Proposition}
\newtheorem{lemma}[theorem]{Lemma}
\newtheorem{corollary}[theorem]{Corollary}
\numberwithin{equation}{section}
\def\Tor{\mathop{\rm Tor}\nolimits}
\def\Ext{\mathop{\rm Ext}\nolimits}
\def\abel#1{#1^{\mathsf{ab}}}
\newcommand\ZZ{{\mathbb{Z}}}
\newcommand\NN{{\mathbb{N}}}
\newcommand\KK{\mathbb{k}}
\title{Isomorphisms of non noetherian down-up algebras}
\author{Sergio Chouhy and Andrea Solotar
\thanks{{\footnotesize This work has been supported by the projects  
UBACYT 20020130100533BA, UBACYT 20020130200169BA, PIP-CONICET 11220150100483CO, and MATHAMSUD-REPHOMOL.
The second author is a research member of CONICET (Argentina).}}
}
\date{}
\begin{document}

\maketitle

\begin{abstract}
 {
We solve the isomorphism problem for non noetherian down-up algebras $A(\alpha,0,\gamma)$ 
by lifting isomorphisms between some of their non commutative quotients. The 
quotients we consider are either quantum polynomial algebras in two variables for 
$\gamma=0$ or quantum versions of the Weyl algebra $A_1$ for non zero $\gamma$.
In particular we obtain that no other down-up algebra is isomorphic to the monomial 
algebra $A(0,0,0)$. We prove in the second part of the article that this is the only 
monomial algebra within the family of down-up algebras. Our method uses homological 
invariants that determine the shape of the possible quivers and we apply the 
abelianization functor to complete the proof.
}
\end{abstract}

\textbf{2010 Mathematics Subject Classification: 16D70, 16E05}

\textbf{Keywords:} down-up algebra, isomorphism, non noetherian, monomial.


\section{Introduction}
Let $\KK$ be a fixed field of characteristic $0$. Given parameters 
$(\alpha,\beta,\gamma)\in\KK^3$, the associated 
\textit{down-up algebra} $A(\alpha,\beta,\gamma)$, first defined in \cite{BR98}, is the 
quotient of the free associative algebra $\KK\langle d,u\rangle$ by the ideal generated 
by the relations
\begin{align*}
 &d^2u - (\alpha dud + \beta ud^2 + \gamma d),\\
 &du^2 - (\alpha udu + \beta u^2d + \gamma u).
\end{align*}
There are several well-known examples of down-up algebras such as $A(2,-1,0)$, isomorphic 
to the enveloping algebra of the Heisenberg-Lie algebra of dimension $3$, and, for 
$\gamma\neq0, A(2,-1,\gamma)$, isomorphic to the enveloping algebra of 
$\mathfrak{sl}(2,\KK)$.

A down-up algebra has a PBW basis given by
\begin{align*}
 \{u^i(du)^jd^k: i,j,k\geq0\}.
\end{align*}
Note that $A(\alpha,\beta,\gamma)$ can be regarded as a $\ZZ$-graded 
algebra where the degrees of $u$ and $d$ are respectively $1$ and $-1$. The field $\KK$ 
is the trivial module over $A(\alpha,\beta,\gamma)$, with $d$ and $u$ acting as $0$.

E. Kirkman, I. Musson and D. Passman proved in \cite{KMP99} that $A(\alpha,\beta,\gamma)$ 
is noetherian if and only it is a domain, if and only if $\beta\neq0$.

The isomorphism problem for down-up algebras was posed in \cite{BR98} where the 
authors considered algebras $A(\alpha,\beta,\gamma)$ of four different 
types and proved, by studying one dimensional modules, that algebras of 
different types are not isomorphic. They considered the following types,
\begin{multicols}{2}
\begin{enumerate}[(a)]
 \item $\gamma=0$, $\alpha+\beta=1$,
 \item $\gamma=0$, $\alpha+\beta\neq1$,
 \item $\gamma\neq0$, $\alpha+\beta=1$,
 \item $\gamma\neq0$, $\alpha+\beta\neq1$.
\end{enumerate}
\end{multicols}
As a consequence, we can restrict the question of whether two 
down-up algebras are isomorphic to each of the four types. In 
\cite{CM00} the authors solved the isomorphism problem for noetherian down-up 
algebras of types (a), (b) and (c) for every algebraically closed field $\KK$, and 
also for noetherian algebras of type (d) when in addition $\mathsf{char}(\KK)=0$. More 
precisely, they proved the following result.

\begin{theorem*}[\cite{CM00}]
Let $A=A(\alpha,\beta,\gamma)$ and $A'=A(\alpha',\beta',\gamma')$ be noetherian down-up 
algebras. Then $A$ is isomorphic to $A'$ if and only if 
\begin{enumerate}
 \item $\gamma=\lambda\gamma'$ for some $\lambda\in\KK^\times$, and
 \item either $\alpha'=\alpha$, $\beta'=\beta$ or 
$\alpha'=-\alpha\beta^{-1}$, $\beta'=\beta^{-1}$.
\end{enumerate}
\end{theorem*}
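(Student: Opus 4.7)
The plan is to prove the two implications separately. The sufficient direction is elementary and proceeds by exhibiting explicit isomorphisms. First, for any $\mu,\nu \in \KK^\times$, the assignment $d \mapsto \mu d$, $u \mapsto \nu u$ extends to an isomorphism $A(\alpha,\beta,\gamma) \to A(\alpha,\beta,\mu\nu\gamma)$, because in each defining relation the cubic monomials rescale by $\mu^2\nu$ while the linear term $\gamma d$ rescales by $\mu$, so the parameters $\alpha, \beta$ are preserved and $\gamma$ is scaled. Second, using crucially the hypothesis $\beta \neq 0$, the swap $d \mapsto u$, $u \mapsto d$ extends to an isomorphism $A(\alpha,\beta,\gamma) \to A(-\alpha\beta^{-1},\beta^{-1},-\gamma\beta^{-1})$: substituting into the two defining relations and dividing by $-\beta$ yields exactly the defining relations of the target. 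Composing the swap with a rescaling produces isomorphisms onto $A(-\alpha\beta^{-1},\beta^{-1},\gamma')$ for any $\gamma' \in \KK^\times\cdot\gamma$, which covers all the isomorphisms claimed in (1)--(2).

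For the necessary direction, the plan is to recover $(\alpha,\beta)$ (up to the swap) and $\gamma$ (up to $\KK^\times$) from intrinsic data of the algebra. The first step is to show that the dichotomy $\gamma=0$ versus $\gamma\neq 0$ is an isomorphism invariant: $A(\alpha,\beta,0)$ is connected $\NN$-graded with $d,u$ in degree one and Artin--Schelter regular of global dimension three, whereas $A(\alpha,\beta,\gamma)$ with $\gamma\neq 0$ is a genuine PBW deformation of it and admits no compatible connected grading (detected by comparing the Hilbert series of the associated graded at the augmentation ideal $\mathfrak{m}=(d,u)$). In the graded case $\gamma=\gamma'=0$, one then argues that every algebra isomorphism must preserve the homologically defined grading and therefore restrict to a matrix $P \in \mathrm{GL}_2(\KK)$ on the span of $d,u$; writing out the condition that $P$ intertwine the two pairs of cubic relations yields a small algebraic system whose only solutions are $(\alpha',\beta')=(\alpha,\beta)$ when $P$ is diagonal, and $(\alpha',\beta')=(-\alpha\beta^{-1},\beta^{-1})$ when $P$ is anti-diagonal (up to rescaling). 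Finally, in the filtered case $\gamma,\gamma'\neq 0$ one passes to the associated graded for the total-degree filtration and invokes the graded result; reading off the lowest-order discrepancy in the matched relations gives $\gamma'=\lambda\gamma$ for some $\lambda \in \KK^\times$.

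The main obstacle is that an arbitrary isomorphism $\phi: A(\alpha,\beta,\gamma) \to A(\alpha',\beta',\gamma')$ need not respect the total-degree filtration when $\gamma \neq 0$, so the last step requires a rigidity statement of the form ``after composition with an automorphism of either side, $\phi$ becomes filtered.'' The standard route is to identify the degree-one piece of the filtration intrinsically, typically as a minimal generating set modulo $\mathfrak{m}^2$, and to use the Artin--Schelter regularity of the associated graded to transport a homological basis back to the filtered algebra. A secondary but nontrivial point is to verify, inside the graded analysis, that no matrix $P \in \mathrm{GL}_2(\KK)$ solves the intertwining equations beyond the diagonal and anti-diagonal families; this ultimately reduces to the fact that the unordered pair of roots of the quadratic $t^2-\alpha t-\beta$ associated to the algebra is an invariant only up to simultaneous inversion, which is precisely the origin of the involution $(\alpha,\beta) \mapsto (-\alpha\beta^{-1},\beta^{-1})$.
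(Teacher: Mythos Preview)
The paper does not prove this theorem. It is stated as a result of Carvalho--Musson \cite{CM00} and only the conclusion is quoted; the sole remark the paper makes about the method is that ``their solution focuses mainly on the possible commutative quotients of down-up algebras.'' So there is no proof in the paper to compare your attempt against.

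That said, a comparison with the hint the paper gives is still informative. Your sufficient direction is exactly the standard one and is correct. Your necessary direction, however, goes through gradings and filtrations rather than through commutative quotients, and the sketch you give is not yet a proof. Two points in particular are not justified:
\begin{itemize}
\item In the case $\gamma=\gamma'=0$ you assert that every algebra isomorphism must respect the ``homologically defined grading'' and hence restrict to a linear map on the span of $d,u$. This is false in general for connected graded algebras (there can be non-graded automorphisms), and while it happens to hold here, it requires an argument specific to these algebras. You would need, at minimum, to show that any isomorphism sends the augmentation ideal to the augmentation ideal and then argue at the level of $\mathfrak m/\mathfrak m^2$.
\item In the case $\gamma,\gamma'\neq 0$ you yourself identify the main obstacle: an arbitrary isomorphism need not be filtered, so passing to the associated graded is illegitimate without the ``rigidity statement'' you allude to. That statement is the real content of this direction and you have not supplied it.
\end{itemize}
The approach via commutative quotients that the paper attributes to \cite{CM00} sidesteps both issues: one reads off invariants (such as the pair of roots of $t^2-\alpha t-\beta$, up to inversion) from the structure of the abelianization or from families of one-dimensional modules, and these are manifestly isomorphism-invariant without any appeal to gradings or filtrations. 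The Benkart--Roby separation into types (a)--(d) recalled just before the theorem already gives, via one-dimensional modules, the invariance of the dichotomies $\gamma=0$ versus $\gamma\neq 0$ and $\alpha+\beta=1$ versus $\alpha+\beta\neq 1$; the argument in \cite{CM00} refines this. If you want to complete your own route, the missing rigidity step is the substantive gap.
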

Their solution focuses mainly on the possible commutative quotients of down-up algebras. 
In contrast with this, there are very well studied non commutative algebras that appear 
as quotients of non noetherian down-up algebras, for example, when 
$\alpha\in\KK^\times$, the quantum plane $\KK_\alpha[x,y]$ and the quantum Weyl algebra 
$A_\alpha^1$,
\begin{align*}
  &\KK_\alpha[x,y] \colonequals \KK\langle x,y\rangle/\left(yx - \alpha xy\right),
  & &&A_\alpha^1 \colonequals \KK\langle x,y \rangle/\left( yx - \alpha xy - 1\right).
\end{align*}

In this article we describe isomorphisms amongst non noetherian down-up algebras by 
using these quotients. Our main result is the following.
\begin{theorem}
\label{teo:isos_down_up}
 Let $\KK$ be an algebraically closed field and let $\alpha,\alpha',\gamma,\gamma'\in 
\KK$. The algebras $A(\alpha,0,\gamma)$ and $A(\alpha',0,\gamma')$ are isomorphic if and 
only if 
\begin{enumerate}
 \item $\gamma = \lambda\gamma'$, for some $\lambda\in \KK^\times$, and 
 \item $\alpha'=\alpha$.
\end{enumerate}
\end{theorem}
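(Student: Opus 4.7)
The plan exploits the factorisations
\begin{align*}
d^2u - \alpha dud - \gamma d &= d\cdot(du - \alpha ud - \gamma), \\
du^2 - \alpha udu - \gamma u &= (du - \alpha ud - \gamma)\cdot u,
\end{align*}
which show that in $A := A(\alpha, 0, \gamma)$ the element $z := du - \alpha ud - \gamma$ satisfies $dz = zu = 0$. Hence the two-sided ideal $J := (z)$ is distinguished, and the quotient $A/J$ has a transparent presentation: it is the quantum plane $\KK_\alpha[u,d]$ when $\gamma = 0$ and, after a rescaling that absorbs $\gamma$, the quantum Weyl algebra $A_\alpha^1$ when $\gamma \neq 0$. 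The ``if'' direction of the theorem is then immediate: for any $\mu, \nu \in \KK^\times$, the assignment $d\mapsto \mu d$, $u \mapsto \nu u$ extends to an isomorphism $A(\alpha,0,\gamma) \to A(\alpha, 0, \gamma/(\mu\nu))$, realising the required scalar freedom in $\gamma$ while preserving $\alpha$.

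For the converse, let $\phi: A(\alpha, 0, \gamma) \to A(\alpha', 0, \gamma')$ be an isomorphism, and write $J'$ for the analogous ideal in the target. The central step is to prove $\phi(J) = J'$, for which I need an intrinsic, isomorphism-invariant description of $J$. The strategy I would pursue is to characterise $J$ via one-sided annihilator conditions: explicitly, to identify $J$ with the two-sided ideal generated by those $a \in A$ whose left annihilator contains a large canonical left ideal \emph{and} whose right annihilator contains the right-handed dual, purely in terms of the ring structure of $A$. A complementary, possibly cleaner, description is as the unique prime of $A$ whose quotient is a domain of Gelfand--Kirillov dimension $2$ satisfying a specified module-theoretic condition; since $J$ is manifestly prime (its quotient being a quantum plane or quantum Weyl algebra), this should be provable by analysing the ideal structure of $A$ directly. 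Once $\phi(J) = J'$ is established, $\phi$ descends to an isomorphism $\bar\phi: A/J \to A'/J'$ between quantum planes or quantum Weyl algebras, to which the known isomorphism classifications apply.

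The descended isomorphism $\bar\phi$ yields the constraint $\alpha' \in \{\alpha, \alpha^{-1}\}$, together with the scalar relation between $\gamma$ and $\gamma'$ when $\gamma \neq 0$. The final piece of the argument is to exclude $\alpha' = \alpha^{-1}$: here I would exploit the asymmetry of the conditions $dz=0$ and $zu=0$, which single out the generators $d$ and $u$ in opposite one-sided roles (so that the putative $d \leftrightarrow u$ swap is not compatible with the intrinsic characterisation of $J$), producing an extra invariant that fixes $\alpha$ itself rather than the unordered pair $\{\alpha, \alpha^{-1}\}$. I expect the main obstacle to lie in this intrinsic characterisation of $J$: because $A$ is non-noetherian and has a rich zero-divisor structure, the centre-and-normal-element techniques that drive the noetherian analogue of \cite{CM00} do not transfer, and the proof has to rely on finer one-sided or representation-theoretic invariants. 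Once the descent is secured, the rest is an application of the established classification results for quantum planes and quantum Weyl algebras.
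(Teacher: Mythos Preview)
Your overall architecture matches the paper's: pass to the quotient $A/J$, identify it as a quantum plane or quantum Weyl algebra, invoke the known isomorphism classification there, and then rule out $\alpha'=\alpha^{-1}$ by lifting back to $A$. However, your plan to establish $\phi(J)=J'$ via an intrinsic (annihilator or prime-spectrum) characterisation of $J$ is unnecessarily hard, and the paper bypasses it completely. Given $\varphi:A\to A'$, compose with the projection $\pi':A'\to A'/J'$. Since $d\omega=\omega u=0$ in $A$, one gets $\psi_1(d)\psi_1(\omega)=\psi_1(\omega)\psi_1(u)=0$ in $A'/J'$; but $A'/J'$ is a domain generated by $\psi_1(d),\psi_1(u)$, so $\psi_1(\omega)=0$ and $\psi_1$ factors through $\pi$. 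Running the same argument with $\varphi^{-1}$ produces the inverse, so the induced map on quotients is an isomorphism (and \emph{a posteriori} $\varphi(J)=J'$). No GK-dimension, no prime classification needed.

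The genuine gap is your exclusion of $\alpha'=\alpha^{-1}$: ``exploit the asymmetry of $dz=0$ and $zu=0$'' is the right intuition but is far from a proof, and this is where the technical weight of the argument lies. The cited classification says that (for $\alpha\neq\pm1$) the induced isomorphism on quotients must swap the generators up to scalars, so after lifting, $\varphi(d)=\mu u'+z_2$ and $\varphi(u)=\lambda d'+z_1$ with $z_i\in J'$. When $\gamma=0$ the algebra $A'$ is graded with $d',u'$ in degree $1$ and $J'\subseteq$ degrees $\geq 2$; substituting into $d^2u-\alpha dud=0$ and reading the degree-$3$ part gives $(u')^2d'-\alpha u'd'u'=0$, contradicting the PBW basis. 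When $\gamma\neq0$ there is no such grading, and the paper has to work harder: it proves that $\{u'^i\omega'^j d'^l\}$ is a basis of $A'$, deduces an explicit basis $\{[u'^i\omega' d'^l]\}$ of $J'/(J')^2$ with closed multiplication formulas, and then substitutes the lifted relation into this bimodule to extract a recursion $\alpha\Lambda_{m-1}=\frac{\alpha^{m+1}-1}{\alpha-1}\Lambda_m$ with $\Lambda_0=\alpha\neq0$, forcing infinitely many nonzero coefficients in $z_1$ or $z_2$. This $J'/(J')^2$ computation is the real heart of the $\gamma\neq0$ case and is invisible in your plan.
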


We obtain in particular that no other down-up algebra is isomorphic to $A(0,0,0)$. In 
Section \ref{sec:monomial_down-ups} we prove 
\begin{theorem}
\label{teo:no_son_mono}
The algebra $A(\alpha,\beta,\gamma)$ is monomial if and only if 
$(\alpha,\beta,\gamma)=(0,0,0)$.
\end{theorem}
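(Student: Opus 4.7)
The ``if'' direction is immediate: $A(0,0,0) = \KK\langle d, u\rangle / (d^2u, du^2)$ has monomial defining relations. For the converse, suppose $A \colonequals A(\alpha,\beta,\gamma)$ is isomorphic to a monomial algebra $\KK Q/I$ for some finite quiver $Q$ and admissible monomial ideal $I$. The plan is to extract homological invariants that pin down the shape of $Q$ and a minimal generating set of $I$, and then invoke the abelianization functor to force the parameters.

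The first step is to show that $Q$ has a single vertex with two loops and that $I$ is minimally generated by two monomials of length three. For $\gamma = 0$ the algebra is connected $\NN$-graded with $\deg(d) = \deg(u) = 1$ and $A_0 = \KK$; one reads off that $\dim_{\KK}\Ext^1_A(\KK, \KK) = 2$ (matching the two generators) and $\dim_{\KK}\Ext^2_A(\KK, \KK) = 2$ (matching the two cubic relations), while the Hilbert series $1/((1-t)^2(1-t^2))$ constrains the degrees of the monomial generators of $I$ to be $3$. The case $\gamma \neq 0$, where $A$ is not naturally graded, is handled by comparing character varieties (the $\KK$-points of $\mathrm{Spec}\,\abel{A}$): for $\alpha + \beta \neq 1$ the character variety of $A$ is the smooth affine conic $\{ab = \gamma/(1-\alpha-\beta)\}$, which cannot agree with the character variety of any finite-quiver monomial algebra, since the latter is always a finite disjoint union of affine subvarieties cut out by coordinate monomials. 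For $\gamma \neq 0$ and $\alpha + \beta = 1$, the character variety collapses to the single point $\{0\}$, which constrains a putative monomial presentation to the form $\KK\langle x, y\rangle/(x^a, y^b)$; since $\mathrm{GKdim}\,A = 3$ whereas $\mathrm{GKdim}\,\KK\langle x, y\rangle/(x^a, y^b) \in \{0, 1, \infty\}$, these cannot be isomorphic.

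Having reduced to $\gamma = 0$ and a presentation $A \cong \KK\langle x, y\rangle/(r_1, r_2)$ with $r_1, r_2$ cubic monomials, applying the abelianization functor yields
\[ \abel{A} \cong \KK[x, y]/(\bar r_1, \bar r_2), \]
with each $\bar r_i$ a commutative cubic monomial. Direct computation on the down-up presentation gives
\[ \abel{A(\alpha, \beta, 0)} = \KK[d, u]/\bigl((1-\alpha-\beta)\,d^2u,\; (1-\alpha-\beta)\,du^2\bigr). \]
The case $\alpha + \beta = 1$ forces $\abel{A} = \KK[d, u]$, a polynomial ring, which cannot be isomorphic to a proper quotient $\KK[x, y]/(\bar r_1, \bar r_2)$ by a nontrivial monomial ideal; hence $\alpha + \beta \neq 1$. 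A Hilbert-series comparison on the commutative quotients then narrows $\{\bar r_1, \bar r_2\}$, up to $x \leftrightarrow y$, to $\{x^2 y, xy^2\}$, so each $r_i$ is one of three noncommutative lifts. A finite case analysis comparing the two-dimensional relation subspace $\mathrm{span}(r_1, r_2) \subset (\KK d \oplus \KK u)^{\otimes 3}$ with $\mathrm{span}(R_1, R_2)$ for $R_1 = d^2u - \alpha\,dud - \beta\,ud^2$ and $R_2 = du^2 - \alpha\,udu - \beta\,u^2d$, and exploiting the $\mathrm{GL}_2(\KK)$-action on the generators, finally forces $\alpha = \beta = 0$.

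The principal obstacle I anticipate is the $\gamma \neq 0$ subcase of the first step, where $A$ carries no canonical $\NN$-grading with positive degrees for $d$ and $u$, so the graded homological data must be replaced by the more geometric character-variety comparison (which in turn requires some care with the linear change of bases relating the two sets of generators, and with pointing out that infinitely many $1$-dimensional simples cannot be accommodated by a finite-quiver monomial algebra). The finishing case analysis, while combinatorial, is essentially mechanical given the constraints collected in the preceding steps.
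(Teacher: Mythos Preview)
Your proposal has two genuine gaps. The more serious one is in the subcase $\gamma\neq 0$, $\alpha+\beta=1$: from the character variety collapsing to a single point you conclude that the monomial presentation must be $\KK\langle x,y\rangle/(x^a,y^b)$, and then invoke a GK-dimension mismatch. But a single-point character variety only tells you, via Lemma~\ref{lemma:monomiales_techo}, that $Q$ has one vertex and that every loop has some power lying in $I_e$; it does \emph{not} bound the number of loops to two, nor does it prevent $I$ from containing further mixed monomials. Algebras such as $\KK\langle x,y,z\rangle/(x^2,y^2,z^2)$ or $\KK\langle x,y\rangle/(x^2,y^2,(xy)^3)$ have single-point character variety yet are not of the claimed form, so your GK-dimension trichotomy does not apply. (There \emph{is} a rescue here --- a power of a loop in $I$ forces Bardzell's resolution to be infinite, contradicting $\gd A=3$ --- but that is not the argument you wrote.)

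The second gap pervades the $\gamma=0$ case. Your use of the Hilbert series to force the monomial generators of $I$ to have length $3$, and your subsequent ``finite case analysis exploiting the $\mathrm{GL}_2(\KK)$-action,'' both tacitly assume that the isomorphism $A\cong\KK Q/I$ is graded (with the path-length grading on the right). Nothing you have said rules out a non-graded isomorphism, under which generators could be sent to, say, $x+xyx$ rather than a linear combination of $x,y$; in that situation neither the Hilbert-series comparison nor the reduction to a $\mathrm{GL}_2$-orbit problem is available. Relatedly, your identification of $\dim\Ext^1_A(\KK,\KK)$ with $\#Q_1$ presumes that the trivial $A$-module $\KK$ corresponds to the vertex simple $T_e$, which is not automatic (the paper gets around this by proving that $\#Q_1$ equals the \emph{supremum} of $\dim\Tor_1$ over all one-dimensional modules, Lemma~\ref{lemma:monomiales_maxtor1}).

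For comparison, the paper's route is shorter and avoids both problems. It first observes that $\beta=0$ is forced because a monomial algebra with $I\neq 0$ is never a domain. For $\gamma=0$ (hence $\alpha\neq 0$) it uses the domain quotient $\KK_\alpha[x,y]$: any monomial relation forces one of the two generators to die in the quotient, so $\KK_\alpha[x,y]$ would be one-generated, a contradiction. For $\gamma\neq 0$ it computes $\Tor_1$ between one-dimensional modules to bound $\#eQ_1e'\leq 1$, then uses finiteness of global dimension to show $I_e=0$, and finally compares unit groups of $\abel A$ and $\abel B$ --- the algebraic version of your conic observation, but without needing the two problematic subcases.
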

So, the only monomial algebra in the family of down-up algebras is the evident one. Our 
starting point is the fact that noetherian down-up algebras cannot be monomial since they 
are a domain of global dimension $3$ \cite{KMP99}.
The situation can be related to $3$-dimensional Sklyanin algebras. In both cases, an 
algebra $A$ is noetherian if and only if it is a domain. For Sklyanin algebras, these 
conditions are equivalent to $A$ being monomial \cite{Sm12}. This is not the case for 
down-up algebras.

Our proof uses homological invariants that determine the possible shapes of the quiver.
We think that these methods may be useful for other families of algebras.

\section{Isomorphisms of non noetherian down-up algebras}

The purpose of this section is to prove Theorem \ref{teo:isos_down_up}. Let $\KK$ be an 
algebraically closed field.
Note that the condition $\gamma = \lambda\gamma'$ for $\lambda\in \KK^\times$ is 
equivalent to the condition of $\gamma$ and $\gamma'$ being both zero or both non zero. 
We already know from \cite{BR98} that if $\gamma\neq0$, then $A(\alpha,0,\gamma)$ is 
isomorphic to $A(\alpha,0,1)$. This is done by rescaling $d$ by $\gamma d$.
 Also, observe that $A(\alpha,0,0)$ is not isomorphic to $A(\alpha',0,1)$ for 
all $\alpha,\alpha'\in \KK$, since they belong to different types. Gathering all this 
information, we deduce that Theorem \ref{teo:isos_down_up} is equivalent 
to the following two propositions: 

\begin{proposition}
 \label{prop1}
Let $\alpha,\alpha'\in \KK$. The algebras $A(\alpha,0,0)$ and $A(\alpha',0,0)$ are 
isomorphic if and only if $\alpha=\alpha'$.
\end{proposition}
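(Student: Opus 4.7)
The plan is to reduce an arbitrary algebra isomorphism $\phi\colon A(\alpha,0,0)\to A(\alpha',0,0)$ to one that respects the natural $\NN$-grading in which $u$ and $d$ both have degree $1$ (the defining relations being homogeneous of degree $3$), and then extract $\alpha=\alpha'$ from how the linear part of $\phi$ interacts with those cubic relations. Two easy reductions come first. Since $A(1,0,0)$ lies in type (a) while $A(\alpha,0,0)$ with $\alpha\ne 1$ lies in type (b), the two cannot be isomorphic when exactly one of $\alpha,\alpha'$ equals $1$, so I may assume $\alpha,\alpha'\in\KK\setminus\{1\}$. Similarly, if $\alpha=0$ then Theorem~\ref{teo:no_son_mono} forces $\alpha'=0$, since being monomial is an isomorphism invariant; it therefore suffices to treat the generic case $\alpha,\alpha'\in\KK\setminus\{0,1\}$.

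Next I would show $\phi(\mathfrak{m})=\mathfrak{m}'$, where $\mathfrak{m}=(u,d)$ denotes the augmentation ideal, by passing to abelianizations. For $\alpha\ne 1$ one has $\abel{A(\alpha,0,0)}=\KK[u,d]/(d^2u,\,u^2d)$, whose reduced quotient is the coordinate cross $\KK[u,d]/(ud)$; the unique singular point of this affine curve is the origin, and it corresponds to $\mathfrak{m}^{\mathsf{ab}}$. Since $\phi$ descends to an isomorphism of abelianizations and the singular locus is intrinsic, the origin is preserved, which pulls back to $\phi(\mathfrak{m})=\mathfrak{m}'$. Because the defining relations are cubic and the generators linear, the $\mathfrak{m}$-adic filtration coincides with the $\NN$-grading, so the associated graded morphism $\mathrm{gr}\,\phi$ is itself a graded isomorphism $A(\alpha,0,0)\to A(\alpha',0,0)$; it remains only to classify graded isomorphisms between these algebras.

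A graded isomorphism is determined by its restriction to degree one, so I would write $\phi(u)=au+bd$, $\phi(d)=cu+ed$ with $ae-bc\ne 0$, expand $\phi(d^2u-\alpha dud)=0$ and $\phi(du^2-\alpha udu)=0$ in $A(\alpha',0,0)$, and reduce everything to the PBW basis $\{u^i(du)^j d^k\}$. The coefficients of $u^3$, $ud^2$ and $d^3$ then give $(1-\alpha)ac^2=(1-\alpha)bce=(1-\alpha)be^2=0$, and with $\alpha\ne 1$ only two subcases remain. The diagonal case $c=0$ forces $b=0$ via $be^2=0$, and the $(du)d$-coefficient collapses to $ae^2(\alpha'-\alpha)=0$, whence $\alpha=\alpha'$. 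The anti-diagonal case $a=0$, $c\ne 0$ forces $e=0$, but then the $u(du)$-coefficient reduces to $\alpha bc^2=0$, contradicting $\alpha\ne 0$ together with $bc\ne 0$.

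I expect the conceptual core of the argument to be the intrinsic identification of $\mathfrak{m}$: each $A(\alpha,0,0)$ with $\alpha\ne 1$ admits a one-parameter family of codimension-$1$ ideals along each axis of the coordinate cross, so one cannot simply appeal to uniqueness of a maximal ideal. The singular-locus argument via the abelianization is precisely what forces $\phi$ to be filtered and makes the reduction to graded isomorphisms possible; once that step is in place, the linear-algebra analysis on the cubic relations is routine bookkeeping.
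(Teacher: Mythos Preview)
Your argument is correct and takes a genuinely different route from the paper's. The paper passes to the quotient $A/\langle\omega\rangle\cong\KK_\alpha[x,y]$ (Lemma~\ref{lemma:cocientes_down_up}), shows that any isomorphism $\varphi$ descends to an isomorphism of quantum planes, and then invokes the classification in Theorem~\ref{teo_mariano_quimey_andrea} to force $\alpha'\in\{\alpha,\alpha^{-1}\}$; the case $\alpha'=\alpha^{-1}$ is excluded by reading off the degree-$3$ component of $\varphi(d^2u-\alpha dud)$ in the grading with $\deg d'=\deg u'=1$. You instead pin down the augmentation ideal intrinsically via the singular locus of the reduced abelianization $\KK[u,d]/(ud)$, pass to the associated graded to reduce to graded isomorphisms, and then run a direct coefficient analysis on the cubic relation. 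Your approach is more self-contained---it avoids the black-box classification of quantum-plane isomorphisms---at the price of the small geometric detour through $\operatorname{Spec}$ of the abelianization and an explicit PBW computation.

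One structural remark: your treatment of the case $\alpha=0$ via Theorem~\ref{teo:no_son_mono} is logically sound (the proof of that theorem in Section~\ref{sec:monomial_down-ups} does not use Proposition~\ref{prop1}), but it reverses the paper's narrative, in which the uniqueness of $A(0,0,0)$ up to isomorphism is presented as a \emph{consequence} of Proposition~\ref{prop1}. You could equally well absorb $\alpha=0$ into your main argument: the abelianization and singular-locus steps go through verbatim for any $\alpha\neq1$, and in the graded analysis the anti-diagonal case is then killed by the $u^2d$-coefficient $bc^2$ rather than the $u(du)$-coefficient.
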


\begin{proposition}
\label{prop2}
Let $\alpha,\alpha'\in\KK$. The algebras $A(\alpha,0,1)$ and $A(\alpha',0,1)$ are 
isomorphic if and only if 
$\alpha=\alpha'$.
\end{proposition}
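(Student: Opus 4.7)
The plan is to reduce the classification to the known isomorphism problem for quantum Weyl algebras by constructing a canonical Weyl-algebra quotient of $A(\alpha,0,1)$ and showing that it is preserved by any isomorphism.

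The starting observation is that the element $w_\alpha := du - \alpha ud - 1 \in A(\alpha,0,1)$ satisfies
\[
d\,w_\alpha = d^2u - \alpha dud - d = 0, \qquad w_\alpha\,u = du^2 - \alpha udu - u = 0,
\]
directly from the two defining relations. Consequently the defining relations of $A(\alpha,0,1)$ lie in the two-sided ideal $I_\alpha := (w_\alpha)$ of $\KK\langle d,u\rangle$, and the natural projection $\KK\langle d,u\rangle \twoheadrightarrow A_\alpha^1$ (sending $y \mapsto d$, $x \mapsto u$) factors through $A(\alpha,0,1)$, giving a surjection $\pi_\alpha\colon A(\alpha,0,1) \twoheadrightarrow A_\alpha^1$ with kernel $I_\alpha$.

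The central step is to show that any isomorphism $\varphi\colon A(\alpha,0,1) \to A(\alpha',0,1)$ satisfies $\varphi(I_\alpha) = I_{\alpha'}$, so that it descends to an isomorphism $\bar\varphi\colon A_\alpha^1 \to A_{\alpha'}^1$. I would attack this by constraining $\varphi$ on the generators: the Ext algebra $\Ext^\bullet_A(\KK,\KK)$ of the trivial module (with $d, u$ acting as $0$) together with its Yoneda structure encodes the ``shape of the quiver'' mentioned in the abstract, and the abelianization $\abel{A(\alpha,0,1)} = \KK[d,u]/(dh, uh)$ with $h := (1-\alpha)du - 1$ --- whose maximal spectrum for $\alpha \neq 1$ is the disjoint union of the origin and the affine hyperbola $(1-\alpha)du = 1$ --- supplies further rigidity. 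Together these invariants should force $\varphi(d)$ and $\varphi(u)$ into a narrow family, essentially scalar multiples of $d$ and $u$ up to a possible swap, so that a direct calculation yields $\varphi(w_\alpha) \in I_{\alpha'}$.

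Having obtained $\bar\varphi$, I would invoke the known isomorphism classification for quantum Weyl algebras (Alev--Dumas), which yields $\alpha' \in \{\alpha, \alpha^{-1}\}$. To rule out the swap $\alpha' = \alpha^{-1}$, I would use the asymmetric roles of $d$ (left-annihilating $w_\alpha$) and $u$ (right-annihilating $w_\alpha$) in $A(\alpha,0,1)$, which $\varphi$ must respect; equivalently, the $\ZZ$-grading with $\deg d = -1$ and $\deg u = 1$ is essentially transported by $\varphi$ up to sign. The main obstacle is the middle step --- proving that $\varphi$ respects $I_\alpha$. A clean route would be an intrinsic characterization of $I_\alpha$, such as the intersection of annihilators of the non-trivial one-dimensional modules of $A(\alpha,0,1)$ (parametrized by the hyperbola $(1-\alpha)d_0 u_0 = 1$ and automatically factoring through $A_\alpha^1$); if this fails, the combined homological and abelianization invariants must rigidify $\varphi$ enough for the computation of $\varphi(w_\alpha)$ to be explicit.
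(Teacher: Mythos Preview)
Your overall architecture --- pass to the quotient $A_\alpha^1$, classify there, and then exclude the swap $\alpha'=\alpha^{-1}$ --- is exactly the paper's. But both of the steps you flag as ``to be worked out'' are genuinely incomplete, and one of your proposed routes is actually false.

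\medskip

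\textbf{Descending to the Weyl quotient.} Your plan to force $\varphi(I_\alpha)=I_{\alpha'}$ via $\Ext$-invariants or the abelianization is the wrong tool, and the alternative you suggest --- characterizing $I_\alpha$ as the intersection of the annihilators of the nontrivial one-dimensional modules --- fails: every one-dimensional module factors through a commutative quotient, so that intersection contains all commutators $ab-ba$, which certainly do not all lie in $I_\alpha$ (the quotient $A_\alpha^1$ is noncommutative). The paper's argument is both simpler and sharper: for $\alpha'\neq 0$ the quotient $A_{\alpha'}^1$ is a domain, and applying $\pi'\circ\varphi$ to the identities $d\,\omega=0=\omega\,u$ in $A$ forces $(\pi'\circ\varphi)(\omega)=0$, since $(\pi'\circ\varphi)(d)$ and $(\pi'\circ\varphi)(u)$ generate $A_{\alpha'}^1$ and hence are nonzero. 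This is how $\varphi$ is shown to descend; no control of $\varphi(d),\varphi(u)$ beyond ``modulo $\langle\omega'\rangle$'' is available or needed at this stage. In particular, your expectation that $\varphi(d),\varphi(u)$ are ``essentially scalar multiples of $d'$ and $u'$ up to a swap'' is not true in $A'$ itself --- only in the quotient.

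\medskip

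\textbf{Excluding $\alpha'=\alpha^{-1}$.} This is where the real work lies, and your sketch does not supply it. The ``asymmetric roles of $d$ and $u$'' and the $\ZZ$-grading are suggestive but not decisive: once you know from the Weyl classification that $\varphi^{-1}(d')=-\lambda^{-1}\alpha\,u+z_1$ and $\varphi^{-1}(u')=\lambda\,d+z_2$ with $z_1,z_2\in\langle\omega\rangle$, the correction terms $z_i$ could in principle repair the swap, and nothing about gradings or left/right annihilation rules them out directly. The paper proves they cannot by a concrete computation: it establishes that $\{u^i\omega^j d^l\}$ is a basis of $\langle\omega\rangle^n$ for $j\geq n$, derives explicit formulas for the $A$-bimodule structure on $\langle\omega\rangle/\langle\omega\rangle^2$, feeds the relation $(d')^2u'-\alpha^{-1}d'u'd'-d'=0$ through $\varphi^{-1}$, and reads off in $\langle\omega\rangle/\langle\omega\rangle^2$ a recursion on certain coefficients $\Lambda_m$ of $z_1,z_2$ that forces $\Lambda_m\neq 0$ for all $m$. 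That contradicts the finiteness of $z_1,z_2$. Without something of this nature, the exclusion of $\alpha^{-1}$ is a gap.
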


We will thus prove both of them in order to obtain our result.
 
\begin{lemma}
\label{lemma:cocientes_down_up}
 Let $\alpha\in\KK^\times,$ $\gamma\in \KK$ and let $A=A(\alpha,0,\gamma)$ be a 
down-up algebra. Denote $\omega\colonequals du - \alpha ud - \gamma$. 
The algebra $A/\langle\omega\rangle$ is isomorphic to $\KK_\alpha[x,y]$ if $\gamma=0$ and 
it is isomorphic to $A_\alpha^1$ if $\gamma\neq0$.
Moreover, in case $\gamma=0$ or $\gamma=1$, the isomorphism maps the 
class of $d$ to $y$ and the class of $u$ to $x$.
\end{lemma}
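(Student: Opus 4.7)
The plan is to observe that, thanks to the absence of the $\beta u d^2$ and $\beta u^2 d$ terms (since $\beta = 0$), the two defining relations of $A$ factor through $\omega$. First I would compute
\begin{align*}
d\omega &= d(du - \alpha ud - \gamma) = d^2u - \alpha dud - \gamma d,\\
\omega u &= (du - \alpha ud - \gamma)u = du^2 - \alpha udu - \gamma u,
\end{align*}
so that the two down-up relations are exactly $d\omega = 0$ and $\omega u = 0$ inside $\KK\langle d,u\rangle$.

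From this it follows immediately that
\begin{equation*}
A/\langle \omega\rangle \;=\; \KK\langle d,u\rangle / \langle d\omega,\, \omega u,\, \omega\rangle \;=\; \KK\langle d,u\rangle/\langle \omega\rangle,
\end{equation*}
because $d\omega$ and $\omega u$ already belong to the two-sided ideal generated by $\omega$. Hence $A/\langle\omega\rangle$ is presented by the two generators $d,u$ and the single relation $du - \alpha ud - \gamma = 0$.

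The remaining step is to identify this presentation with the claimed algebras. For $\gamma = 0$ the map $x\mapsto u$, $y\mapsto d$ from $\KK\langle x,y\rangle$ sends $yx-\alpha xy$ to $du-\alpha ud = \omega$, and so induces an isomorphism $\KK_\alpha[x,y]\to A/\langle\omega\rangle$ (with inverse obvious from the presentation). For $\gamma = 1$ the same map sends $yx-\alpha xy - 1$ to $\omega$, giving an isomorphism $A_\alpha^1\to A/\langle\omega\rangle$ with $x\mapsto u$, $y\mapsto d$ as required. For a general nonzero $\gamma$ the rescaling $d\leftrightsquigarrow \gamma^{-1}d$ (already used in the preamble of the section to identify $A(\alpha,0,\gamma)$ with $A(\alpha,0,1)$) reduces the claim to the $\gamma=1$ case.

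There is no real obstacle here: the entire content is the factorisation $d^2u-\alpha dud-\gamma d = d\omega$ and $du^2-\alpha udu-\gamma u = \omega u$, which is what makes $\omega$ a natural element to quotient by. Once this is noted, the identification of the quotient with $\KK_\alpha[x,y]$ or $A_\alpha^1$ is a direct reading of the presentation.
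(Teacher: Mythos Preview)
Your proof is correct and follows essentially the same approach as the paper: both hinge on the observation that the two defining relations of $A(\alpha,0,\gamma)$ factor as $d\Omega=0$ and $\Omega u=0$ in the free algebra (where $\Omega=du-\alpha ud-\gamma$), so that modding out by $\omega$ leaves just $\KK\langle d,u\rangle/\langle\Omega\rangle$, which is then identified by inspection with the quantum plane or quantum Weyl algebra. The only cosmetic difference is that for general $\gamma\neq 0$ the paper rescales $d$ directly inside the quotient rather than invoking the isomorphism $A(\alpha,0,\gamma)\cong A(\alpha,0,1)$, but this is the same rescaling either way.
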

\begin{proof}
 The algebra $A(\alpha,0,\gamma)$ is the quotient of the free algebra generated by 
the variables $d$ and $u$ subject to the relations $d^2u - \alpha dud - \gamma d=0$ and 
$du^2 - \alpha udu - \gamma u=0$. Denote by $\Omega$ the element $du -\alpha ud 
-\gamma$ in the free algebra. The projection of $\Omega$ onto 
$A(\alpha,0,\gamma)$ is $\omega$. The defining relations of $A$ are 
$d\Omega=0$ and $\Omega u=0$. Therefore, the algebra $A/\langle \omega\rangle$ 
is isomorphic to the algebra freely generated by letters $d,u$ 
subject to the relation $\Omega=0$. If $\gamma=0$, then this is exactly the 
definition of $\KK_\alpha[x,y]$. If $\gamma\neq0$, then $\omega= 
\gamma^{-1}((\gamma d)u - \alpha u(\gamma d) - 1)$, and so $A/\langle 
\omega\rangle$ is the quantum Weyl algebra generated by $x$ and $y$, with $y = \gamma d$ 
and $x=u$.
\end{proof}

In \cite{RS06} the authors describe all isomorphisms and automorphisms for quantum 
Weyl algebras $A_\alpha^1$ for $\alpha\in \KK^\times$ not a root 
of unity. In \cite{SV15} this result is generalized to the family 
of \textit{quantum generalized Weyl algebras}, including the 
quantum plane and the quantum Weyl algebra for all values of $\alpha\in 
\KK^\times$. We recall some of their results in the cases relevant to us.

\begin{theorem}[\cite{RS06},\cite{SV15}]
\label{teo_mariano_quimey_andrea}
 Let $\alpha,\alpha'\in \KK\setminus\{0,1\}$. 
 \begin{enumerate}[i)]
 \item The algebras $\KK_\alpha[x,y]$ and $\KK_{\alpha'}[x,y]$ are isomorphic if and 
only if $\alpha'\in\{\alpha,\alpha^{-1}\}$. Moreover, if 
$\varphi:\KK_\alpha[x,y]\to \KK_{\alpha^{-1}}[x,y]$ is an isomorphism and 
$\alpha\neq-1$, then there exist $\lambda,\mu\in \KK^\times$ such that 
$\varphi(x) = \lambda y \,\text{ and }\varphi(y) = \mu x.$
\item The algebras $A_\alpha^1$ and 	$A_{\alpha'}^1$ are isomorphic if and only if 
$\alpha'\in\{\alpha,\alpha^{-1}\}$. If $\alpha\neq-1$, 
then every isomorphism $\eta:A_\alpha^1\to A_{\alpha^{-1}}^1$ is of the form
$\eta(x)= \lambda y\,\text{ and }\eta(y) = -\lambda^{-1}\alpha^{-1}x,$
for some $\lambda\in \KK^\times$.
\end{enumerate}
\end{theorem}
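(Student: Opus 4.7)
The plan is to handle both parts (i) and (ii) with the same two-step strategy: first exhibit explicit isomorphisms realising $\alpha'\in\{\alpha,\alpha^{-1}\}$, and then show that every isomorphism is forced to this shape by pinning down the images of the generators via an intrinsic invariant.

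For part (i), the ``if'' direction is realised by the identity in case $\alpha'=\alpha$ and by the swap $x\mapsto y$, $y\mapsto x$ in case $\alpha'=\alpha^{-1}$, since that sends the relation $yx=\alpha^{-1}xy$ of $\KK_{\alpha^{-1}}[x,y]$ to $xy=\alpha^{-1}yx$, i.e.\ the defining relation of $\KK_\alpha[x,y]$. For the converse, my plan is to use homogeneous normal elements. A direct computation with $z=ax+by$ and the ansatz $zx=f(x)z$, $zy=f(y)z$ for a hypothetical normalising automorphism $f$ shows, as soon as $\alpha\neq 1$, that either $a$ or $b$ must vanish; hence the normal elements of degree one are exactly $\KK x\cup\KK y$. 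After checking that the natural $\NN$-grading is recovered intrinsically (for instance via the Hilbert series, via the augmentation ideal of the trivial representation, or via the Gelfand--Kirillov filtration), any isomorphism $\varphi\colon\KK_\alpha[x,y]\to\KK_{\alpha'}[x,y]$ must send $\{x,y\}$ to $\{\lambda x,\mu y\}$ or $\{\lambda y,\mu x\}$ for some $\lambda,\mu\in\KK^\times$. Imposing the target relation on either image yields $\alpha'=\alpha$ or $\alpha'=\alpha^{-1}$, and the ``moreover'' clause drops out.

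For part (ii), the ``if'' direction is a direct verification: with $\eta(x)=\lambda y$ and $\eta(y)=-\lambda^{-1}\alpha^{-1}x$, one computes using $yx-\alpha^{-1}xy=1$ in $A_{\alpha^{-1}}^1$ that $\eta(y)\eta(x)-\alpha\,\eta(x)\eta(y)=1$, as required. My plan for the ``only if'' direction is to reduce to part (i) via the standard ascending filtration with $\deg x=\deg y=1$, whose associated graded is $\KK_\alpha[x,y]$. This filtration is intrinsic (it coincides with the Gelfand--Kirillov filtration), so any isomorphism $A_\alpha^1\to A_{\alpha'}^1$ induces an isomorphism of associated graded quantum planes, and part (i) yields $\alpha'\in\{\alpha,\alpha^{-1}\}$. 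The explicit form of $\eta$ in the twisted case is then extracted by writing $\eta(x)=\lambda y+p$ and $\eta(y)=\mu x+q$ with $p,q$ of lower filtration degree, matching leading terms via (i), and using the inhomogeneity of the defining relation together with the hypothesis $\alpha\neq -1$ to kill $p$ and $q$.

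The main obstacle will be the intrinsic recovery of the grading (resp.\ filtration) from the bare algebra structure: only once this is in hand does the normal-element analysis translate into an actual constraint on isomorphisms. The value $\alpha=-1$ is a genuine exception at this step, since the automorphism group enlarges there and additional ``linear combinations'' of the generators become admissible; this is why the ``moreover'' clauses of both parts explicitly exclude it.
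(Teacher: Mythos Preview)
The paper does not contain a proof of this theorem: it is quoted from the cited references and used as a black box in the proofs of the two propositions that follow. There is therefore no argument in the present paper to compare yours against.

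Regarding the outline itself: the overall shape is reasonable, but the step you yourself flag as ``the main obstacle'' is not actually carried out, and the devices you list do not settle it. The Hilbert series presupposes a grading and so cannot be used to recover one; the Gelfand--Kirillov filtration depends on a choice of finite-dimensional generating subspace and is not a priori preserved by an abstract algebra isomorphism; and the ``trivial representation'' is only one among infinitely many one-dimensional modules of $\KK_\alpha[x,y]$ when $\alpha\neq 1$, so singling it out already requires an extra invariant. Until this step is made precise, your normal-element computation in degree one (which is correct) does not constrain an arbitrary isomorphism. A cleaner route for part~(i), at least when $\alpha$ is not a root of unity, is to use the full set of normal elements as the invariant: these are exactly the scalar multiples of monomials $x^iy^j$, so any isomorphism sends $x$ and $y$ to monomials, and surjectivity forces degree one. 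The root-of-unity case needs separate treatment, and your sketch does not yet cover it; the cited sources handle all cases uniformly through the structure theory of quantum generalized Weyl algebras rather than by recovering the grading first.
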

%

\begin{proof}[Proof of Proposition \ref{prop1}]
 Let $\alpha$ and $\alpha'$ be elements of $\KK$ and suppose there exists an isomorphism 
of $\KK$-algebras $\varphi: A(\alpha,0,0)\to A(\alpha',0,0)$. Denote 
$A\colonequals A(\alpha,0,0)$, 
$A'\colonequals A(\alpha',0,0)$ and let $d'$ and $u'$ be the usual generators of $A'$.
 

Suppose $\alpha,\alpha'\in \KK\setminus\{0,1\}$ and $\alpha\neq\alpha'$. Let 
$\omega\colonequals du - \alpha ud$ and $\omega'\colonequals d'u' - 
\alpha'u'd'$. By Lemma \ref{lemma:cocientes_down_up}, we can identify $A/\langle 
\omega\rangle$ with $\KK_{\alpha}[x,y]$, where the canonical projection $\pi:A\to 
\KK_{\alpha}[x,y]$ sends $d$ to $y$ and $u$ to $x$, and similarly for $A'/\langle 
\omega'\rangle$ and $\KK_{\alpha'}[x,y]$; here we denote by $\pi'$ the canonical 
projection. Define $\psi_1=\pi'\circ\varphi$.
The equalities $d\omega=0$ and $\omega u=0$ hold in $A$, so 
\[\psi_1(d)\psi_1(\omega)=0=\psi_1(\omega)\psi_1(u).\]
The algebra $\KK_{\alpha'}[x,y]$ is a non commutative domain generated by $\psi_1(d)$ and 
$\psi_1(u)$. Thus, $\psi_1(d)$ and $\psi_1(u)$ are not zero and from the above equations 
we deduce $\psi_1(\omega)=0$. This implies that there 
exists an algebra map $\overline\psi_1:\KK_{\alpha}[x,y] \to \KK_{\alpha'}[x,y]$ such 
that $\psi_1=\overline\psi_1\circ\pi$. In the other 
direction we obtain that $\psi_2\colonequals\pi\circ\varphi^{-1}$ factors as 
$\psi_2 = \overline\psi_2\circ\pi'$. Since 
$\overline\psi_1\circ\overline\psi_2\circ\pi'=\pi'$ and 
$\overline\psi_2\circ\overline\psi_1\circ\pi = \pi$, we deduce 
$\overline\psi_1$ is an isomorphism. The situation is illustrated by the 
following commutative diagram,
\[
\xymatrix{A\ar[d]_-{\pi}\ar[r]^-{\varphi}&A'\ar[d]^-{\pi'}\\
\KK_\alpha[x,y]\ar@<0.8ex>[r]^{\overline\psi_1} & \KK_{\alpha'}[x,y] 
\ar@<0.8ex>[l]^-{\overline\psi_2}}
\]
By Theorem \ref{teo_mariano_quimey_andrea} and our assumption that 
$\alpha\neq\alpha'$, we obtain $\alpha'=\alpha^{-1}$. Theorem 
\ref{teo_mariano_quimey_andrea} also says that there exist $\lambda,\mu\in 
\KK^\times$ and $z_1,z_2\in\langle\omega'\rangle$ such that $\varphi(u)= \lambda 
d'+z_1$ and $\varphi(d)=\mu u'+z_2$.
Note that $A'$ is graded considering the 
generators $d'$ and $u'$ in degree $1$. 
Since $\deg(\omega')=2$, it follows that $z_1$ and $z_2$ are either $0$ or sums of 
homogeneous elements of degree at least $2$ with respect to this grading.
On the other hand $d^2u - \alpha dud$ is $0$, 
and so 
\[0=\varphi(d)^2\varphi(u) - \alpha\varphi(d)\varphi(u)\varphi(d).\] 
In particular the degree $3$ component of the right hand side of this equality, that is 
$(u')^2d' - \alpha u'd'u'$, must be $0$. But the set $\{(u')^i(d'u')^j(d')^l: 
i,j,l\in\NN_0\}$ is a $\KK$-basis of $A'$, and this is a contradiction. 
 
In case $\alpha=0$, an argument similar to the above one shows that there is an 
epimorphism $\psi: A\to \KK_{\alpha'}[x,y]$. As a consequence the elements $\psi(d)$ and 
$\psi(u)$ 
generate $\KK_{\alpha'}[x,y]$. If $\alpha'\neq0$, then the algebra 
$\KK_{\alpha'}[x,y]$ is a domain and it is not commutative, thus it cannot be generated 
by one element. From the equality $0=d^2u$ we obtain that 
$0=\psi(d^2u)=\psi(d)^2\psi(u)$, implying $\psi(d)=0$ or $\psi(u)=0$. This is a 
contradiction and so $\alpha'=0$.

If $\alpha=1$, then $A$ belongs to type (a) and so does $A'$. This implies 
$\alpha'=1$, concluding the proof of the proposition.
\end{proof}

Now we turn our attention to Proposition \ref{prop2}.
Let $A=A(\alpha,0,1)$ for $\alpha\in \KK$. Recall that $\omega\colonequals du - 
\alpha ud - 1$. Using Lemma 2.2 in \cite{Zh99}, the set $\{u^i\omega^jd^l: i,j,l\geq 
0\}$ is a $\KK$-basis of $A$.

\begin{lemma}
\label{lemma:omega^n}
The set $\{u^i\omega^jd^l: i,l\geq0\text{ and }j\geq1\}$ is a $\KK$-linear basis 
of the two sided ideal $\langle \omega \rangle$, and, for each 
$n\in\NN$, the set $\{u^i\omega^jd^l: i,l\geq0\text{ and }j\geq n\}$ is a 
$\KK$-linear basis of $\langle \omega \rangle^n$. 
\end{lemma}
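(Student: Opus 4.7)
The plan is to reduce the statement to a careful expansion using the PBW basis and the two annihilation identities satisfied by $\omega$. The first observation is that the defining relations of $A$ can be rewritten as $d\omega = 0$ and $\omega u = 0$: indeed, $d\omega = d^2u - \alpha dud - d$, which is zero in $A$, and similarly $\omega u = du^2 - \alpha udu - u = 0$. These two identities will do all the work.

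Next, I would treat the case $n=1$ directly. The two-sided ideal $\langle \omega\rangle$ is spanned by elements of the form $a\omega b$ with $a,b \in A$, and by the PBW basis of Zhao we may take $a = u^{i_1}\omega^{j_1}d^{l_1}$ and $b = u^{i_2}\omega^{j_2}d^{l_2}$. Using $d\omega = 0$, the product vanishes whenever $l_1 \geq 1$, and using $\omega u = 0$, it vanishes whenever $i_2 \geq 1$. So only the terms with $l_1 = i_2 = 0$ survive, and for these
\[
a\omega b = u^{i_1}\omega^{j_1}\cdot \omega \cdot \omega^{j_2}d^{l_2} = u^{i_1}\omega^{j_1+j_2+1}d^{l_2},
\]
which already lies in the claimed spanning set. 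Linear independence is immediate, since these elements are part of the PBW basis of $A$.

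For general $n$ I would argue by the same reduction applied $n$ times. Every element of $\langle\omega\rangle^n$ is a sum of products $a_0\omega a_1\omega \cdots \omega a_n$ with $a_k \in A$. Expanding each $a_k = u^{i_k}\omega^{j_k}d^{l_k}$ in the PBW basis and repeatedly invoking $d\omega = 0$ and $\omega u = 0$, the only non-zero contributions are those with $l_0 = \cdots = l_{n-1} = 0$ and $i_1 = \cdots = i_n = 0$, which collapse to
\[
u^{i_0}\omega^{\,j_0+j_1+\cdots+j_n+n}\,d^{l_n}.
\]
Hence $\langle\omega\rangle^n$ is spanned by $\{u^i\omega^jd^l : i,l\geq0,\ j\geq n\}$, and linear independence is again inherited from the PBW basis of $A$. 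The only mildly delicate point, and the one I would be most careful about, is keeping track that the annihilation identities really remove every term except the ``telescoping'' one; once that reduction is done, the rest is bookkeeping.
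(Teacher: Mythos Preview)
Your proof is correct and follows essentially the same strategy as the paper's: both rely on the PBW basis $\{u^i\omega^jd^l\}$ together with the annihilation identities $d\omega=0$ and $\omega u=0$ to collapse products. The one minor difference is in the $n=1$ step: the paper argues by projecting onto the quotient $A/\langle\omega\rangle\cong A_\alpha^1$ and using the basis $\{x^iy^l\}$ there to kill the $j=0$ coefficients, whereas you bypass the quotient entirely and reduce $a\omega b$ directly via the same annihilation argument you use for general $n$. Your route is slightly more self-contained and uniform across all $n$; the paper's route makes the link to the quotient algebra explicit, which it needs anyway for the subsequent corollary.
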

\begin{proof}
Every element of the form $u^i\omega^jd^l$ 
with $j\geq1$ belongs to $\langle \omega\rangle$, so it only remains to prove
that $\langle\omega\rangle$ is contained in the $\KK$-vector space with 
basis the set $\{u^i\omega^jd^l: i,l\geq0\text{ and }j\geq1\}$. Given $z\in\langle 
\omega\rangle$, write $z=\sum_{i,j,l}\lambda_{i,j,l}u^i\omega^jd^l$ with 
$i,j,l\geq0$ and $\lambda_{i,j,l}\in \KK$. By Lemma \ref{lemma:cocientes_down_up} 
we can identify $A/\langle\omega\rangle$ with $A^1_\alpha$, and the canonical 
projection $\pi:A\to A^1_\alpha$ sends $u$ to $x$ and $d$ to $y$. The set 
$\{x^iy^l: i,l\geq0\}$ is a basis of $A^1_\alpha$. From the equalities
$\sum_{i,l}\lambda_{i,0,l}x^iy^l=\pi(z)=0,$ we deduce $\lambda_{i,0,l}=0$ for all 
$i,l\geq0$.

Taking into account the description we now have of $\langle\omega\rangle$, we see that 
the elements of $\langle\omega\rangle^2$ are linear combinations of monomials of type
$u^i\omega^jd^lu^{i'}\omega^{j'}d^{l'}$, with $j,j'\geq1$. Similarly, the 
elements of $\langle\omega\rangle^n$ are linear combinations of $n$-fold 
products of the same type. Therefore, to prove the second claim, it is 
sufficient to show that for every $r,s\geq0$ there exist $\lambda_i\in \KK$ such that 
$\omega d^ru^s\omega = \sum_{i\geq2}\lambda_i\omega^i$. Indeed, there exist 
$\lambda_{i,j,l}\in\KK$ such that
\[
 d^ru^s = \sum_{i,j,l\geq0}\lambda_{i,j,l}u^i\omega^jd^l.
\]
So
\[
 \omega d^ru^s \omega 
= \sum_{i,j,l\geq0}\lambda_{i,j,l}\omega u^i\omega^jd^l\omega
=\sum_{j\geq0}\lambda_{0,j,0}\omega^{j+2}.
\]
The last equality follows from $d\omega =0$ and $\omega u=0$.
\end{proof}

\begin{corollary}
\label{coro:formulas}
The set $\{[u^i\omega d^l]: i,l\geq0\}$, where 
$[p]$ denotes the class of an element $p$ in 
$\langle\omega\rangle/\langle\omega\rangle^2$, is a $\KK$-linear basis of the 
$A$-bimodule $\langle\omega\rangle/\langle\omega\rangle^2$. Moreover, in case  
$\alpha\neq1$, the following equalities hold

\begin{align*}
 &[u^i\omega d^lu] = \frac{\alpha^l-1}{\alpha-1}[u^i\omega d^{l-1}],\\
 &[du^i\omega d^l] = \frac{\alpha^i-1}{\alpha-1}[u^{i-1}\omega d^l],
\end{align*}
where the terms on the right are considered to be zero for $l=0$ or $i=0$.
\end{corollary}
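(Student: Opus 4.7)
The first assertion is essentially a direct consequence of Lemma \ref{lemma:omega^n}. That lemma gives $\KK$-bases $\{u^i\omega^j d^l : i,l\geq 0, j\geq 1\}$ of $\langle\omega\rangle$ and $\{u^i\omega^j d^l : i,l\geq 0, j\geq 2\}$ of $\langle\omega\rangle^2$, so the classes $\{[u^i\omega d^l] : i,l\geq 0\}$ span the quotient and are linearly independent in it, giving a basis.

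For the two displayed formulas the plan is to establish commutation identities of the form
\begin{align*}
 d^l u &= \alpha^l u d^l + \tfrac{\alpha^l-1}{\alpha-1}\,d^{l-1} + z_l, \\
 d u^i &= \alpha^i u^i d + \tfrac{\alpha^i-1}{\alpha-1}\,u^{i-1} + z'_i,
\end{align*}
valid in $A$, where $z_l,z'_i \in \langle\omega\rangle$. Both are proved by induction on $l$ (resp.\ $i$) starting from the single identity $du = \alpha ud + 1 + \omega$, which is just a rewriting of the definition of $\omega$. The inductive step for the first one amounts to rewriting $d\cdot d^{l-1}u$ by pushing the leftmost $d$ across a $u$ using $du = \alpha ud+1+\omega$, and noting that any term containing $\omega$ lies in $\langle\omega\rangle$; the scalar coefficient $\alpha^{l-1}+\frac{\alpha^{l-1}-1}{\alpha-1}$ simplifies to $\frac{\alpha^l-1}{\alpha-1}$, which is the only numerical check needed. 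The second identity is proved in the exact same way.

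Once these are established, the formulas follow by a direct calculation modulo $\langle\omega\rangle^2$. Multiplying the first identity on the left by $u^i\omega$ gives
\[
 u^i\omega d^l u = \alpha^l\, u^i\omega u\, d^l + \tfrac{\alpha^l-1}{\alpha-1}\, u^i\omega d^{l-1} + u^i\omega z_l,
\]
and the first term vanishes because $\omega u = 0$ in $A$, while $u^i\omega z_l \in \langle\omega\rangle^2$ since $z_l\in\langle\omega\rangle$. Passing to the quotient yields the first equality of the corollary. Multiplying the second identity on the right by $\omega d^l$ and using $d\omega = 0$ together with $z'_i\omega \in \langle\omega\rangle^2$ produces the second equality in exactly the same fashion. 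The special cases $l=0$ and $i=0$ agree with the convention in the statement, since $\frac{\alpha^0-1}{\alpha-1}=0$ and $u^i\omega u = du^i\omega \cdot 1$ with the corresponding terms also vanishing from $\omega u=0$ and $d\omega=0$.

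The only potential obstacle is organizing the inductive step cleanly so that the ``error terms'' in $\langle\omega\rangle$ are tracked correctly and seen to land in $\langle\omega\rangle^2$ after the final multiplication; this is where the explicit use of the identities $d\omega = 0$ and $\omega u = 0$ (which are precisely the defining relations of $A$ rewritten in terms of $\omega$) is essential. Everything else is routine manipulation of the PBW-type basis provided by Lemma \ref{lemma:omega^n}.
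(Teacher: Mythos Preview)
Your proof is correct and follows essentially the same inductive idea as the paper, using the relations $\omega u=0$ and $d\omega=0$ together with Lemma~\ref{lemma:omega^n}. The only organizational difference is that the paper works directly in $\langle\omega\rangle/\langle\omega\rangle^2$ and inducts using the down-up relation $d^2u=\alpha dud+d$ (together with the observation $\omega du=\omega^2+\omega$), whereas you first prove the commutation identity $d^l u=\alpha^l u d^l+\tfrac{\alpha^l-1}{\alpha-1}d^{l-1}+z_l$ in $A$ via $du=\alpha ud+1+\omega$ and only then pass to the quotient; both routes are equivalent and equally short.
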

\begin{proof}
 The first claim is a direct consequence of Lemma \ref{lemma:omega^n}. To prove the 
first formula, we fix $i\geq0$ and proceed by induction on $l$, the case $l=0$ being 
trivial from the equalities $\omega u=0=d\omega$. On the other hand, since $\omega^2 = 
\omega(du-\alpha ud -1) = \omega du - 
\omega$, we obtain that $\omega du  = \omega^2 + \omega$. Similarly $du\omega = 
\omega^2 + \omega$. Therefore, $[u^i\omega du] = [u^i\omega]$. Now, for $l\geq2$

\begin{align*}
 [u^i\omega d^lu] = [u^i\omega d^{l-2}(\alpha dud + d)] &= \alpha [u^i\omega 
d^{l-1}ud] + [u^i\omega d^{l-1}]\\
&=\alpha\frac{\alpha^{l-1}-1}{\alpha-1}[u^i\omega d^{l-2}d] + [u^i\omega 
d^{l-1}]\\
&=\frac{\alpha^l-1}{\alpha-1}[u^i\omega d^{l-1}].
\end{align*}

The second formula can be proved analogously.
\end{proof}

\begin{proof}[Proof of Proposition \ref{prop2}]
Let $\alpha,\alpha'\in \KK$. Denote $A\colonequals A(\alpha,0,1)$, 
$A'\colonequals A(\alpha',0,1)$. Let $d'$, $u'$ be the generators of 
$A'$. Suppose there exists an isomorphism of $\KK$-algebras $\varphi:A\to A'$. Recall 
that $\omega=du - \alpha ud - 1$ and $\omega'= d'u' - \alpha'u'd' - 1$.

If $\alpha=1$, then $A$ belongs to type (a), and so does $A'$, hence $\alpha'=1$. Now 
suppose $\alpha=0$ and $\alpha'\neq0$. By Lemma 
\ref{lemma:cocientes_down_up} the algebra $A'/\langle \omega'\rangle$ is 
isomorphic to $A_{\alpha'}^1$ and, if $\pi'$ denotes the canonical projection, then  
$\pi'(u')=x$ and $\pi'(d')=y$. Let $\psi = \pi'\circ\varphi$. Since 
$d\omega=\omega u =0$, we have $\psi(d)\psi(\omega)=\psi(\omega)\psi(u)=0$. 
Note that $\psi(d)$ and $\psi(u)$ generate $A_{\alpha'}^1$, and therefore 
they cannot belong to $\KK$; in particular they cannot be zero. We 
deduce $0=\psi(\omega)=\psi(d)\psi(u)-1$. The algebra $A_{\alpha'}^1$ has a filtration 
whose associated graded algebra $\mathsf{Gr}(A_{\alpha'}^1)$ is $\KK_{\alpha'}[x,y]$. The 
equality $\psi(d)\psi(u)=1$ implies that $\mathsf{Gr}(A_{\alpha'}^1)$ is not a domain, 
which is a contradiction since $\alpha'\neq0$.


Suppose $\alpha,\alpha'\in \KK\setminus\{0,1\}$ and $\alpha\neq\alpha'$. By the 
same arguments as in the proof of Proposition \ref{prop1}, the map 
$\psi\colonequals\pi'\circ\varphi :A\to A_{\alpha'}^1$ induces an 
isomorphism of $\KK$-algebras $\overline\psi:A_{\alpha}^1\to A_{\alpha'}^1$. Theorem 
\ref{teo_mariano_quimey_andrea} implies $\alpha'=\alpha$ or 
$\alpha'=\alpha^{-1}$. Since we are assuming $\alpha\neq\alpha'$, we deduce 
$\alpha'=\alpha^{-1}$. Again, by Theorem \ref{teo_mariano_quimey_andrea} we 
obtain that 
there exist $\lambda\in \KK^\times$ and $z_1,z_2\in\langle \omega\rangle$ such 
that 

\begin{align*}
 \varphi^{-1}(d') &= -\lambda^{-1}\alpha u + z_1,\\
 \varphi^{-1}(u') &= \lambda d + z_2.
\end{align*}

By rescaling the variables $d,u$, we may assume $\lambda=1$.
The equality $(d')^2u' - \alpha^{-1}d'u'd' - d'=0$ implies 

\begin{align*}
 0 &= \varphi^{-1}(d')^2\varphi^{-1}(u') - 
\alpha^{-1}\varphi^{-1}(d')\varphi^{-1}(u')\varphi^{-1}(d') - 
\varphi^{-1}(d')\\
 &=\alpha^2u^2d - \alpha udu + \alpha u + \alpha^2 u^2 z_2 - \alpha u z_1 d 
- \alpha z_1 ud \\ &\hskip0.5cm+ udz_1 - \alpha uz_2u + z_1du - z_1 + z\\
&=-\alpha u\omega + \alpha (\alpha u^2 z_2 - uz_1d) + (udz_1-\alpha uz_2u) + 
z_1\omega + z \in\langle \omega\rangle,
\end{align*}

where $z$ denotes the sum of all terms in which at least two factors $z_1$ or $z_2$ are 
involved.
Note that $z_1\omega$ and $z$ belong to $\langle\omega\rangle^2$. Taking classes modulo 
$\langle\omega\rangle^2$,
\[
 \alpha [u\omega] + \alpha([uz_1d]-\alpha[u^2z_2])  = [udz_1] - \alpha[uz_2u].
\]
Write now $z_1=\sum_{i,l\geq0,j\geq1}\lambda_{i,j,l}u^i\omega^jd^l$ and $z_2= 
\sum_{i,l\geq0,j\geq1}\mu_{i,j,l}u^i\omega^jd^l$. Using the formulas of 
Corollary \ref{coro:formulas} we obtain
\begin{align*}
 \alpha[u\omega] + &\sum_{i,l\geq0}\alpha(\lambda_{i,1,l}[u^{i+1}\omega 
d^{l+1}] - \alpha\mu_{i,1,l}[u^{i+2}\omega d^l]) =\\ 
&=\sum_{i\geq1,l\geq0}\lambda_{i,1,l}\frac{\alpha^{i}-1}{\alpha-1}[u^i\omega 
d^l] - \sum_{i\geq0,l\geq1} 
\alpha\mu_{i,1,l}\frac{\alpha^{l}-1}{\alpha-1}[u^{i+1}\omega d^{l-1}].
\end{align*}

By Corollary \ref{coro:formulas}, the set $\{[u^i\omega d^l]: i,l\geq0\}$ is a 
$\KK$-linear basis of $\langle\omega\rangle/\langle\omega\rangle^2$. For 
$m\geq0$, define $\Lambda_m\colonequals \lambda_{m+1,1,m} - 
\alpha\mu_{m,1,m+1}$. Looking at the coefficient 
corresponding to the term $[u^{m+1}\omega d^m]$ in the last equation for each $m\geq0$, 
we deduce

\begin{align*}
  &\alpha = \Lambda_0,\\
  &\alpha\Lambda_{m-1} = \frac{\alpha^{m+1}-1}{\alpha-1}\Lambda_{m},\,\text{ 
for 
}m\geq1.
 \end{align*}
The fact that $\Lambda_0=\alpha\neq0$ implies, by an inductive argument, that 
$\Lambda_{m}\neq0$ for all $m\in\NN$. As a consequence, either 
$\lambda_{m+1,1,m}\neq0$ for infinitely many values of $m\in\NN$, or $\mu_{m,1,m+1}\neq0$ 
for infinitely many values of $m\in\NN$. This is a contradiction that comes from the 
assumption $\alpha\neq\alpha'$.
\end{proof}

\section{Monomial down-up algebras}
\label{sec:monomial_down-ups}
An algebra is monomial if it is isomorphic to an algebra of the form 
$\KK Q/I$, 
where $Q$ is a quiver with a finite number of vertices and $I$ is a two-sided 
ideal in $\KK Q$ generated by paths of length at least $2$. The algebra $A(0,0,0)$ is 
monomial and no other down-up algebra is isomorphic to it. However, other monomial 
down-up algebras may exist.
In this section we prove Theorem \ref{teo:no_son_mono}.
Before doing it we prove a series of preparatory lemmas.

We will make use of the abelianization functor defined on $\KK$-algebras as
\[
 A\mapsto \abel A \colonequals A/J_A,
\]
where $J_A$ is the two sided ideal in $A$ generated by the set 
$\{xy-yx:x,y\in A\}$. The canonical projection $\pi_A:A\to \abel A$ is a 
natural transformation from the identity to the abelianization functor.

In order to state the next lemma we need some previous definitions.
Given a quiver $Q$ with a finite number of vertices and $e,e'\in Q_0$, 
define $eQ_1e'\colonequals\{\alpha\in Q_1: 
\mathsf{t}(\alpha)=e, \mathsf{s}(\alpha)=e'\}$, 
where $\mathsf{t}$ and $\mathsf{s}$ are the usual target and source maps. Also,
denote by $B_e$ the $\KK$-algebra $\KK[X_\alpha: \alpha\in eQ_1e]$. That is, $B_e$ is the 
polynomial algebra in variables indexed by 
the elements of the set $eQ_1e$. In case $eQ_1e=\emptyset$ we set $B_e=\KK$. If $I$ is a 
two-sided ideal in $\KK Q$ generated by 
paths of length at least $2$, define $I_e$ to be the ideal in $B_e$ 
generated by the set 
\[
\bigcup_{n\geq2}\{X_{\alpha_n}\cdots X_{\alpha_1}:\alpha_n\cdots\alpha_1\in I, 
\alpha_i\in eQ_1e\}.\] 

\begin{lemma}
 \label{lemma:monomiales_techo}
 Let $Q$ be a quiver with a finite number of vertices and $I$ a two-sided 
ideal in $\KK Q$ generated by paths of length at least $2$. There is an isomorphism of 
$\KK$-algebras 
\[\abel{\left(\frac{\KK Q}{I}\right)}\cong \bigoplus_{e\in Q_0}\frac{B_e}{I_e}.\]
\end{lemma}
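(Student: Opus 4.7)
My plan is to first reduce to computing $\abel{\KK Q}$, then to analyze how the image of $I$ sits inside it. Since the abelianization functor is right-exact, for any quotient we have $\abel{(\KK Q/I)}\cong\abel{\KK Q}/\pi(I)$, where $\pi\colon\KK Q\to\abel{\KK Q}$ is the canonical projection. So the work splits into (a) decomposing $\abel{\KK Q}$ and (b) identifying $\pi(I)$.

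For step (a), the key observation is that every non-loop arrow dies in the abelianization. Indeed, if $\alpha\in eQ_1e'$ with $e\neq e'$, then in $\abel{\KK Q}$ we have
\[
 \alpha \;=\; e\alpha \;\equiv\; \alpha e \pmod{J_{\KK Q}}
\]
but $\alpha e = (\alpha e')e = 0$ since $e\neq e'$. More generally, for a path $p=\alpha_n\cdots\alpha_1$ containing at least one non-loop arrow $\alpha_i\in eQ_1e'$ with $e\neq e'$, after commuting factors modulo $J_{\KK Q}$ one can bring that arrow next to a mismatched idempotent, forcing $[p]=0$. Thus the only surviving generators are the vertex idempotents $\{e:e\in Q_0\}$ and the loop arrows $\{\alpha:\alpha\in eQ_1e,\ e\in Q_0\}$, and all pairs of loops at distinct vertices multiply to zero because their endpoints do not match. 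Using the orthogonality $ee'=\delta_{e,e'}e$ and $\sum_e e = 1$, the Peirce decomposition gives
\[
 \abel{\KK Q} \;=\; \bigoplus_{e\in Q_0} e\,\abel{\KK Q}\,e,
\]
and each summand $e\abel{\KK Q}e$ is generated as a commutative $\KK$-algebra by the (freely commuting) loop variables at $e$, hence is precisely $B_e$.

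For step (b), because $I$ is generated by paths of length at least $2$, its image $\pi(I)$ in $\bigoplus_e B_e$ is generated by the images of those generators. By step (a), a generator $\alpha_n\cdots\alpha_1\in I$ maps to zero unless every $\alpha_i$ is a loop at a single common vertex $e$ (the composability constraint forces the common vertex), in which case it maps to $X_{\alpha_n}\cdots X_{\alpha_1}\in B_e$. Therefore $\pi(I)=\bigoplus_e I_e$ under the decomposition, and quotienting yields the desired isomorphism.

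The one point requiring care is the claim that non-loop arrows (and, more generally, any path that uses a non-loop arrow) vanish in the abelianization; this is where the interaction between commutator relations and the orthogonality of vertex idempotents is essential, but no obstacle beyond that arises.
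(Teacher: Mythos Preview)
Your proof is correct and follows essentially the same strategy as the paper: first identify $\abel{\KK Q}$ with $\bigoplus_{e\in Q_0} B_e$ via the Peirce decomposition over the central orthogonal idempotents $\overline e$, and then track the image of $I$ under the canonical projection (which the paper phrases via the functoriality diagram for $\abel{(-)}$, and you phrase as right-exactness). Your explicit observation that every non-loop arrow satisfies $\alpha = e\alpha \equiv \alpha e = 0$ in $\abel{\KK Q}$ makes the first step transparent; the only point left implicit is why the surviving loop variables commute \emph{freely} in $e\,\abel{\KK Q}\,e$, but this is immediate once one notes that the obvious algebra map $\KK Q\to\bigoplus_e B_e$ provides a section.
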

\begin{proof}
The classes $\overline e$ in $\abel{\KK Q}$ of the vertices $e$ in $Q_0$ are a complete 
set of central orthogonal idempotents and $\overline e(\abel{\KK Q})\overline e$ is 
isomorphic to $\abel{(e\KK Qe)}$, and thus isomorphic to $B_e$. As a consequence, there 
is an isomorphism $\theta:\abel{\KK Q}\to \bigoplus_{e\in Q_0}B_e$ such that 
$\theta(\pi(\alpha))=X_\alpha$ for all $\alpha\in ekQe$ and $e\in Q_0$. Let $g$ be the 
map $\abel f\circ\theta^{-1}$. The commutativity of the diagram
\[
 \xymatrix{
  \KK Q \ar[r]^-{\pi}\ar[d]^-{f}  & \abel{\KK Q} \ar[d]^-{\abel f}\ar[r]^-{\theta} & 
\bigoplus_{e\in Q_0}B_e\ar[dl]^-{g}\\
  \KK Q/I \ar[r] & \abel{(\KK Q/I)}
 }
\]
implies that $g$ is surjective and that its kernel is $(\theta\circ\pi)(I + 
J_{\KK Q})=(\theta\circ\pi)(I)=\oplus_{e\in Q_0}I_e$, and the lemma follows.

\end{proof}

The following lemma is a well known result for finite dimensional algebras replacing 
$\Tor$ by $\Ext$. Here we give the proof for completeness. 
\begin{lemma}
\label{lemma:monomiales_maxtor1}
 Let $B=\KK Q/I$ be a monomial algebra. For each $e\in Q_0$ denote by $T_e$ the 
simple $B$-module corresponding to $e$. If $e,e'\in Q_0$, then 
$\#eQ_1e'=\dim_\KK\Tor_1^B(T_e,T_{e'})$. Moreover, if $Q$ has only one vertex 
$e$, then the dimension of $\Tor_1^B(T_e,T_e)$ is 
\[
 \sup\{\dim_\KK\Tor_1^B(T_1,T_2):T_1,T_2 \text{ are 
one dimensional $B$-modules}\}.
\]
\end{lemma}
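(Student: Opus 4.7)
The plan is to construct the start of the minimal projective resolution of the left simple module $T_{e'}$ over $B = \KK Q/I$ and read off $\Tor_1^B(T_e, T_{e'})$ from its first term. Since $B$ is basic, the projective cover of $T_{e'}$ is $P_0 = Be'$, while the projective cover of the radical submodule $\mathrm{rad}(Be')$ is $P_1 = \bigoplus_{\alpha \in Q_1,\, \mathsf{s}(\alpha) = e'} B\,\mathsf{t}(\alpha)$, with the map sending the generator corresponding to $\alpha$ to $\alpha$ itself. Because every differential in the minimal projective resolution has image inside $\mathrm{rad}(B) \cdot P_\bullet$, applying $T_e \otimes_B -$ annihilates all differentials, and
\[
 \Tor_1^B(T_e, T_{e'}) \;=\; T_e \otimes_B P_1 \;=\; \bigoplus_{\alpha:\, \mathsf{s}(\alpha) = e'} T_e \cdot \mathsf{t}(\alpha).
\]
Each summand equals $\KK$ when $\mathsf{t}(\alpha) = e$ and vanishes otherwise, giving $\dim_\KK \Tor_1^B(T_e, T_{e'}) = \#eQ_1e'$, which establishes the first assertion.

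For the second assertion, suppose $Q$ has a unique vertex $e$ with loops $x_1, \ldots, x_n$. Any one-dimensional left $B$-module $T_2$ is determined by a character $\chi_2: B \to \KK$ with $\chi_2(x_i) = \mu_i$, and I would verify that the kernel of the canonical projection $B \to T_2$ equals the left ideal $K = \sum_i B(x_i - \mu_i)$: every monomial in the $x_i$ reduces modulo $K$ to a scalar, so $B/K$ has dimension at most one and hence coincides with $T_2$. This produces an exact sequence $B^n \to B \to T_2 \to 0$ which extends to a projective resolution. Tensoring with any one-dimensional right module $T_1$ shows that $\Tor_1^B(T_1, T_2)$ is a subquotient of $T_1 \otimes_B B^n = \KK^n$, whose dimension is at most $n = \#Q_1$. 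Since $T_e$ itself is one-dimensional and achieves $\dim_\KK \Tor_1^B(T_e, T_e) = \#Q_1$ by the first part, the supremum is attained at the pair $(T_e, T_e)$.

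The main subtlety I expect is justifying the form of the first two terms of the minimal projective resolution in this (possibly infinite-dimensional) setting; I would handle this intrinsically via the radical and the basic hypothesis, rather than by invoking an explicit combinatorial resolution. For the non-trivial one-dimensional modules in part (2), the point is \emph{not} to produce a minimal resolution (which may be much harder to describe, since the relevant syzygies depend on the $\mu_i$ and the monomial relations) but merely a projective presentation with $P_1 = B^n$, which already suffices to bound $\dim_\KK \Tor_1^B(T_1, T_2)$.
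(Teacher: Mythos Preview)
Your argument is correct. The paper's proof is very close in spirit but works with the minimal projective \emph{bimodule} resolution of $B$,
\[
\cdots \to B\otimes_E \KK Q_1\otimes_E B \to B\otimes_E B \to B \to 0 \qquad (E=\KK Q_0),
\]
and applies the functor $T_e\otimes_B(-)\otimes_B T_{e'}$; minimality again kills the differentials and gives $\Tor_1^B(T_e,T_{e'})\cong \KK\, eQ_1e'$. For the second assertion the paper simply reuses the same bimodule complex, now tensoring with arbitrary one-dimensional modules $T_1,T_2$: the degree-one term becomes $T_1\otimes_\KK \KK Q_1\otimes_\KK T_2\cong \KK^{\#Q_1}$, and $\Tor_1$ is a subquotient of it. Your route is the one-sided analogue: you build the start of a left resolution of $T_{e'}$, and for a general $T_2$ you produce the explicit presentation $B^n\to B\to T_2\to 0$ via the elements $x_i-\mu_i$. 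The two arguments amount to the same bookkeeping; the bimodule version is slightly more uniform (one resolution handles both parts without constructing anything ad hoc), while yours avoids bimodules entirely and makes the presentation of an arbitrary one-dimensional module completely concrete.
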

\begin{proof}
Let $e,e'\in Q_0$. The first terms of the minimal projective bimodule resolution of $B$ 
are
\[
 \cdots\rightarrow B\otimes_E\KK Q_1\otimes_EB\rightarrow B\otimes_EB\rightarrow 
B\rightarrow 0,
\]
where $E=\KK Q_0$.

Applying the functor $T_e\otimes_B(\,-\,)\otimes_BT_{e'}$ to this resolution we obtain 
the following complex
\[
 \cdots\rightarrow T_e\otimes_\KK\KK eQ_1e'\otimes_\KK T_{e'}\rightarrow 
T_e\otimes_\KK T_{e'}\rightarrow 0,
\]
whose homology is isomorphic to $\Tor_\bullet^B(T_e,T_{e'})$. The minimality of the 
resolution implies that every arrow in the above complex is zero. As a consequence, 
$\Tor_1^B(T_e,T_{e'})\cong T_e\otimes_\KK\KK eQ_1e'\otimes_\KK T_{e'}\cong e\KK Q_1e'$, 
from where we deduce that the dimension of $\Tor_1^B(T_e,T_{e'})$ is $\#eQ_1e'$.
As for the second assertion, the same argument shows that if $T_1,T_2$ are 
one dimensional $B$-modules, then the homology of the complex
\[
 \cdots\rightarrow T_1\otimes_\KK\KK Q_1\otimes_\KK T_2\rightarrow 
T_1\otimes_\KK T_2\rightarrow 0,
\]
is isomorphic to $\Tor_\bullet^B(T_1,T_2)$. It follows that 
$\dim_\KK \Tor_1^B(T_1,T_2) \leq\dim_\KK(\KK Q_1)=\#Q_1=\dim_\KK \Tor_1^B(T_e,T_e)$, 
where $e$ is the only vertex of $Q$.
\end{proof}

\begin{lemma}
\label{lemma:cuentas_tor1}
 Let $A=A(\alpha,0,\gamma)$ and let $T_1,T_2$ be one dimensional $A$-modules.
 \begin{enumerate}[(i)]
  \item If $\gamma\neq0$ and $\alpha=1$, then $\dim_\KK \Tor_1^A(T_1,T_2)=0$.
  \item If $\gamma\neq0$ and $\alpha\neq1$, then 
$\dim_\KK \Tor_1^A(T_1,T_2)\leq1$.
  \item If $\gamma=0$, then $\dim_\KK \Tor_1^A(T_1,T_2) \leq2$ and 
$\dim_\KK \Tor_1^A(\KK,\KK) =2$. Moreover, if $\alpha\neq1$ and $T_1\neq \KK$, then 
$\dim_\KK\Tor_1^A(T_1,T_1)=1$.
 \end{enumerate}
\end{lemma}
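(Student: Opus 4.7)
The plan is to identify $\Tor_1^A(T_1,T_2)$ with an explicit and small $\KK$-linear quotient via the long exact sequence of $\Tor$ for $0\to L_2\to A\to T_2\to 0$, where $L_2=A(d-\delta_2)+A(u-\mu_2)$ is the left ideal annihilating the generator of $T_2$ and $\delta_2,\mu_2\in\KK$ are the scalars by which $d$ and $u$ act on $T_2$. Since $A$ is free the sequence gives
\[
\Tor_1^A(T_1,T_2)\cong\Ker\bigl(T_1\otimes_A L_2\to T_1\otimes_A A\bigr).
\]
Setting $x=d-\delta_2$ and $y=u-\mu_2$ so that $L_2=Ax+Ay$, the target $T_1\otimes_A A$ is $\KK$, and $T_1\otimes_A L_2$ is a quotient of $T_1\otimes_A A^2=\KK^2$. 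The two defining relations $r_1,r_2$ of $A$ produce, after substituting $d=x+\delta_2$ and $u=y+\mu_2$ and regrouping every monomial to end in $x$ or $y$, expressions $r_i=a_ix+b_iy+c_i$ in the free algebra; the scalar $c_i$ is a multiple of $(1-\alpha)\delta_2\mu_2-\gamma$, which vanishes because $T_2$ is a one dimensional $A$-module. Hence $(a_i,b_i)\in\Ker(A^2\to L_2)$ and its image $v_i=(\mathbf 1\cdot a_i,\mathbf 1\cdot b_i)\in\KK^2$ lies in the kernel of the surjection $\KK^2\twoheadrightarrow T_1\otimes_A L_2$.

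The map $T_1\otimes_A L_2\to\KK$ sends the classes of the standard generators to $\delta_1-\delta_2$ and $\mu_1-\mu_2$, so the resulting upper bound is
\[
\dim_\KK\Tor_1^A(T_1,T_2)\leq 2-\dim_\KK\langle v_1,v_2\rangle-[\,T_1\not\cong T_2\,],
\]
where the bracket is $1$ if $T_1\not\cong T_2$ and $0$ otherwise. All upper bounds in (i)--(iii) now follow by direct substitution. When both modules are trivial (which covers case (i) and the $\KK$-$\KK$ subcase of (ii)) one finds $v_1=(\gamma,0)$ and $v_2=(0,\gamma)$ of full rank, collapsing the bound to $0$. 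For the non trivial modules allowed in (ii), characterised by $\delta\mu=\gamma/(1-\alpha)$, the $2\times 2$ matrix with columns $v_1,v_2$ has determinant $\gamma^2-(1-\alpha)^2\delta^2\mu^2=0$ but is non zero, so it has rank $1$ and the bound yields $\leq 1$. In (iii) the bound $\leq 2$ is automatic, and for $T_1=T_2\neq\KK$ with $\alpha\neq 1$ a short substitution shows one coordinate of $v_1$ is non zero, lowering the bound to $1$.

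The two equalities in (iii) require matching lower bounds. When $T_1=T_2=T$ the left ideal $L_2$ coincides with the two sided ideal $L=\Ker\epsilon_T$ attached to the character of $T$, and the above argument actually yields $\Tor_1^A(T,T)\cong L/L^2$. The case $T=\KK$, $\gamma=0$ is then immediate: the defining relations of $A(\alpha,0,0)$ are cubic so they lie in $A_+^3\subseteq A_+^2=L^2$, hence no relation is imposed on $[d],[u]\in L/L^2$ and the dimension is $2$. For $T\neq\KK$ with $\gamma=0$ and $\alpha\neq 1$ the analogous calculation (after choosing $\mu=0$, $\delta\neq 0$) forces $[y]=0$ through the single non trivial constraint coming from $r_1$ while leaving $[x]$ unconstrained, giving dimension $1$. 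The main obstacle I expect is exactly this identification step: one has to argue that no further hidden higher order relations of $A$ produce additional constraints on $L/L^2$, which is what makes the lower bounds match the upper bounds extracted only from $r_1$ and $r_2$.
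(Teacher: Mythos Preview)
Your approach is genuinely different from the paper's and largely correct, but the gap you flag at the end is real and is exactly what separates your argument from a complete proof.

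The paper does not work with the short exact sequence $0\to L_2\to A\to T_2\to 0$. Instead it invokes the explicit length-three free bimodule resolution of $A$ from \cite{CS15} and applies $T_1\otimes_A(\place)\otimes_A T_2$, obtaining a four-term complex $0\to\KK\to\KK^2\xrightarrow{f_1}\KK^2\xrightarrow{f_0}\KK\to 0$ with $f_0,f_1$ written down as explicit matrices in $\delta_i,\mu_i,\alpha,\gamma$. Because the resolution is known to be exact, $\dim\Tor_1$ equals $\dim\Ker f_0-\mathrm{rank}\,f_1$ on the nose, so upper and lower bounds come out simultaneously. Your vectors $v_1,v_2$ are precisely the columns of the paper's $f_1$; what you are missing is the exactness information that guarantees no further columns are needed.

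Your route buys you something: the upper bounds in (i)--(iii) and the equality $\dim\Tor_1^A(\KK,\KK)=2$ for $\gamma=0$ follow with no external input beyond the presentation and the $\NN$-grading, whereas the paper imports a resolution. But for the remaining equality $\dim\Tor_1^A(T_1,T_1)=1$ when $\gamma=0$, $\alpha\neq1$, $T_1\neq\KK$, you must still show $L/L^2\neq 0$, and the sentence ``leaving $[x]$ unconstrained'' does not do this: it only says $r_1,r_2$ impose no constraint on $[x]$, not that $A$ imposes none. One elementary fix, in the spirit of your argument: after normalising to $\mu=0$, $\delta\neq 0$, the quotient map $A\twoheadrightarrow A/(u)\cong\KK[d]$ carries $L$ onto $(d-\delta)$ and hence induces a surjection $L/L^2\twoheadrightarrow (d-\delta)/(d-\delta)^2\cong\KK$, giving $\dim L/L^2\geq 1$. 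With that added, your proof is complete; without it, the last clause of (iii) is unproven.
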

\begin{proof}
 Let $T_1,T_2$ be one dimensional $A$-modules with bases $\{v_1\}$ and 
$\{v_2\}$, respectively. Let $\delta_1,\delta_2,\mu_1,\mu_2\in \KK$ be such that 
$d\cdot v_i=\delta_iv_i$ and $u\cdot v_i=\mu_iv_i$ for $i=1,2$.
From the equalities $d^2u-\alpha dud -\gamma d =0 =du^2 - \alpha udu-\gamma u$ we deduce
\begin{align}
\label{ecuaciones}
\begin{split}
  &\delta_i((1-\alpha)\delta_i\mu_i - \gamma)=0,\\
  &\mu_i((1-\alpha)\delta_i\mu_i - \gamma)=0,
\end{split}
\end{align}
for $i=1,2$. Consider the following resolution of $A$ as $A$-bimodule \cite{CS15}
\begin{align*}
\label{resolucion}
     0 \rightarrow A \otimes_\KK \Omega \otimes_\KK A 
           \overset{d_{3}}{\rightarrow} A \otimes_\KK R \otimes_\KK A 
           \overset{d_{2}}{\rightarrow} A \otimes_\KK V \otimes_\KK A 
           \overset{d_{1}}{\rightarrow} A \otimes_\KK A 
           \rightarrow 0,
\end{align*}
where $V$, $R$ and $\Omega$ are the subspaces of the free algebra $\KK\langle d,u\rangle$ 
spanned, respectively, by the sets $\{d,u\}$, $\{d^2u,du^2\}$ and $\{d^2u^2\}$. The 
differentials are
\begin{equation*}
\label{eq:delta2}
\begin{split}
d_{1}(1\otimes v\otimes 1) &=  v \otimes 1 - 1 \otimes v, \hskip0.5cm \text{for all 
}v\in V,\\
d_{2}(1 \otimes d^{2} u \otimes 1) &= 1 \otimes d \otimes d u +  d \otimes d 
\otimes u 
+ d^{2} \otimes u \otimes 1 
\\
&- \alpha (1 \otimes d \otimes u d + d \otimes u \otimes d  + d u \otimes d 
\otimes 1)
\\
&- \beta (1\otimes u \otimes d^{2} + u \otimes d \otimes d  + u d \otimes d 
\otimes 1)
\\
&- \gamma \otimes d \otimes 1,
\end{split}
\end{equation*}
\begin{equation*}
\label{eq:delta2bis}
\begin{split}
d_{2}(1 \otimes d u^{2} \otimes 1) &= 1 \otimes d \otimes u^{2}  + d \otimes u 
\otimes u  
+ d u \otimes u \otimes 1 
\\
&- \alpha (1\otimes u \otimes d u  +  u \otimes d \otimes u  +  u d \otimes u 
\otimes 1)
\\
&- \beta (1\otimes u \otimes u d  + u \otimes u \otimes d  +  u^{2} \otimes d 
\otimes 1)
\\
&- \gamma  \otimes u \otimes 1,
\end{split}
\end{equation*}
and 
\begin{equation}
\label{eq:delta3}
\begin{split}
d_{3}(1 \otimes d^{2} u^{2} \otimes 1) &= d \otimes d u^{2} \otimes 1 + \beta 
\otimes d u^{2} \otimes d  
\\
&-1 \otimes d^{2} u \otimes u  - \beta u \otimes d^{2} u \otimes 1.
\end{split}
\end{equation} 

Applying the functor $T_1\otimes_A(\,-\,)\otimes_A T_2$ to this resolution of $A$ we 
obtain the following complex of $\KK$-vector spaces whose homology is isomorphic to 
$\Tor_\bullet^A(T_1,T_2)$,
\[
\xymatrix{
 0 \ar[r] & 
 \KK\ar[r]^-{f_2} &
 \KK^2 \ar[r]^-{f_1} &
 \KK^2 \ar[r]^-{f_0} & 
 \KK \ar[r] &
 0,
}
\]
where
\begin{align*}
 &f_0=\begin{pmatrix}\delta_2-\delta_1 & \mu_2-\mu_1 \end{pmatrix}\text{ and} 
 \\[1em]
 &f_1=\begin{pmatrix}
 (1-\alpha)\delta_1\mu_1 + \delta_2(\mu_1-\alpha\mu_2)-\gamma & \mu_1(\mu_1-\alpha\mu_2)\\
 \delta_2(\delta_2-\alpha\delta_1) & (1-\alpha)\delta_2\mu_2 + 
\mu_1(\delta_2-\alpha\delta_1)-\gamma
\end{pmatrix}.
\end{align*}
The claims of the lemma follow from these formulas and Equation \eqref{ecuaciones}.
\end{proof}

We now turn to the proof of Theorem \ref{teo:no_son_mono}. Suppose 
$(\alpha,\beta,\gamma)\neq(0,0,0)$. Denote as usual $A(\alpha,\beta,\gamma)$ by $A$. Let 
$B=\KK Q/I$ be a monomial algebra and suppose there exists an isomorphism of 
$\KK$-algebras $\varphi:A\to B$. Since every down-up algebra has global dimension $3$ 
\cite{KK00},
we deduce that $I\neq0$ and so $B$ is not a domain, thus we get $\beta=0$.

Suppose $\gamma=0$. In this case $\alpha\neq0$ since we are assuming 
$(\alpha,\beta,\gamma)\neq(0,0,0)$. Note that

\[\abel A=\KK[d,u]/\langle(1-\alpha)d^2u,(1-\alpha)du^2\rangle,\]
in particular it is connected and so is $\abel B$. By Lemma \ref{lemma:monomiales_techo}, 
the quiver $Q$ has only one vertex $e$. Moreover, by Lemmas 
\ref{lemma:monomiales_maxtor1} and \ref{lemma:cuentas_tor1} we deduce 

\[2=\dim_\KK(T_e,T_e)=\#Q_1,\] 
thus, $Q$ has exactly two arrows $a$ and $b$. By Lemma 
\ref{lemma:cocientes_down_up} the quantum plane $\KK_\alpha[x,y]$
is a quotient of $A$ and so there exists an epimorphism $\varphi:B\to \KK_\alpha[x,y]$. 
Since $\alpha\neq0$, the quantum plane $\KK_\alpha[x,y]$ is a domain. Given a non zero 
path $p$ in $I$, $\varphi(p)=0$, implying that either $\varphi(a)$ or 
$\varphi(b)$ is zero. As a consequence, the quantum plane is generated as algebra by 
one variable, which is a contradiction.
%


Now suppose $\gamma\neq0$. If $\alpha=1$, then $\Tor_1^A(T_1,T_2)=0$ for every pair of 
one dimensional $A$-modules $T_1,T_2$, from Lemma \ref{lemma:cuentas_tor1}. Since $A\cong 
B$, this is also true for $B$ and one dimensional 
$B$-modules. By Lemma \ref{lemma:monomiales_maxtor1}, the quiver $Q$ has no arrows. This 
is impossible, and so $\alpha\neq1$. The same lemmas imply in this case that there is 
at most one arrow between each pair of vertices in $Q$. In particular, there is 
at most one element in $eQ_1e$ for every vertex $e$. 

Define $V=\{e\in Q_0: \#eQ_1e=1\}$ and for every $e\in V$, denote $a_e$ the unique 
element in $eQ_1e$.
Since $A$, and thus $B$, is of global dimension $3$, Bardzell's resolution \cite{Ba97}
of $B$ is of finite length. So ${a_e}^n\notin I$ for all $n\in\NN$. This implies 
$B_e=\KK[X]$ and $I_e=0$ for all $e\in V$, and $B_e=\KK$ for all $e\notin V$. By Lemma 
\ref{lemma:monomiales_techo},
\[
 \abel B \cong \bigoplus_{e\notin V}\KK\oplus\bigoplus_{e\in V} \KK[X].
\]
In particular, its group of units is contained in the finite dimensional vector space 
$\KK^{\#Q_0}$.
On the other hand, the fact that the ideals $\langle d,u\rangle$ and $\langle 
(1-\alpha)du-\gamma)\rangle$ in $\KK[d,u]$ are coprime implies
\[
 \abel A \cong \KK\oplus \frac{\KK[d,u]}{\langle 
(1-\alpha)du-\gamma\rangle}.
\]
The group of units of this algebra is contained in no finite dimensional space.
Since $\abel B$ is isomorphic to $\abel A$, this is a contradiction and 
we conclude the proof of Theorem \ref{teo:no_son_mono}.

\bibliographystyle{model1-num-names}
\addcontentsline{toc}{section}{References}

\begin{bibdiv}
\begin{biblist}
\bib{Ba97}{article}{
   author={Bardzell, Michael J.},
   title={The alternating syzygy behavior of monomial algebras},
   journal={J. Algebra},
   volume={188},
   date={1997},
   number={1},
   pages={69--89},
}
%
%
\bib{BR98}{article}{
   author={Benkart, Georgia},
   author={Roby, Tom},
   title={Down-up algebras},
   journal={J. Algebra},
   volume={209},
   date={1998},
   number={1},
   pages={305--344},
} 
%
%
%
\bib{CM00}{article}{
   author={Carvalho, Paula A. A. B.},
   author={Musson, Ian M.},
   title={Down-up algebras and their representation theory},
   journal={J. Algebra},
   volume={228},
   date={2000},
   number={1},
   pages={286--310},
}
%
%
\bib{CS15}{article}{
   author={Chouhy, Sergio},
   author={Solotar, Andrea},
   title={Projective resolutions of associative algebras and ambiguities},
   journal={J. Algebra},
   volume={432},
   date={2015},
   pages={22--61},
}

\bib{CHS16}{article}{
   author={Chouhy,Sergio},
   author={Herscovich, Estanislao},
   author={Solotar, Andrea},
   title={Hochschild homology and cohomology of down-up algebras},
   journal={arXiv:1609.09809},
   date={2016}
}

\bib{KK00}{article}{
   author={Kirkman, Ellen},
   author={Kuzmanovich, James},
   title={Non-Noetherian down-up algebras},
   journal={Comm. Algebra},
   volume={28},
   date={2000},
   number={11},
   pages={5255--5268},
}

\bib{KMP99}{article}{
   author={Kirkman, Ellen},
   author={Musson, Ian M.},
   author={Passman, D. S.},
   title={Noetherian down-up algebras},
   journal={Proc. Amer. Math. Soc.},
   volume={127},
   date={1999},
   number={11},
   pages={3161--3167},
}
%
%
\bib{RS06}{article}{
   author={Richard, Lionel},
   author={Solotar, Andrea},
   title={Isomorphisms between quantum generalized Weyl algebras},
   journal={J. Algebra Appl.},
   volume={5},
   date={2006},
   number={3},
   pages={271--285},
}
%
%
%

\bib{Sm12}{article}{
   author={Smith, S. Paul},
   title={``Degenerate'' 3-dimensional Sklyanin algebras are monomial
   algebras},
   journal={J. Algebra},
   volume={358},
   date={2012},
   pages={74--86},
}

\bib{SV15}{article}{
   author={Su{\'a}rez-Alvarez, Mariano},
   author={Vivas, Quimey},
   title={Automorphisms and isomorphisms of quantum generalized Weyl
   algebras},
   journal={J. Algebra},
   volume={424},
   date={2015},
   pages={540--552},
}
%
%
%
%
\bib{Zh99}{article}{
   author={Zhao, Kaiming},
   title={Centers of down-up algebras},
   journal={J. Algebra},
   volume={214},
   date={1999},
   pages={103--121},
}

\end{biblist}
\end{bibdiv}

\bigskip

\footnotesize
\noindent S.C.:
\\ IMAS, UBA-CONICET,
Consejo Nacional de Investigaciones Cient\'\i cas y T\'ecnicas, \\
Ciudad Universitaria, Pabell\'on I, 1428 Buenos Aires, Argentina
\\{\tt schouhy@dm.uba.ar}

\medskip

\noindent A.S.:
\\Departamento de Matem\'atica, Facultad de Ciencias Exactas y Naturales, Universidad
de Buenos Aires,\\
Ciudad Universitaria, Pabell\'on I, 1428 Buenos Aires, Argentina; and \\
IMAS, UBA-CONICET,
Consejo Nacional de Investigaciones Cient\'\i ficas y T\'ecnicas, Argentina
\\{\tt asolotar@dm.uba.ar}

\end{document}